\documentclass[12pt]{amsart}
\usepackage[utf8]{inputenc}
\usepackage[shortlabels]{enumitem}
\usepackage{amssymb}
\usepackage{mathrsfs}
\usepackage{stmaryrd}
\usepackage[all]{xy}
\usepackage[margin=1in]{geometry} 
\usepackage[bookmarks, bookmarksdepth=2, colorlinks=true, linkcolor=blue, citecolor=blue, urlcolor=blue]{hyperref}
\usepackage{eucal}
\usepackage{contour}
\usepackage{shuffle}
\usepackage[normalem]{ulem}
\usepackage{tikz}
\usetikzlibrary{positioning}
\usetikzlibrary{arrows.meta}
\usetikzlibrary{decorations.markings}
\usetikzlibrary{plotmarks}
\usepackage{dsfont}

%% comments

%% global settings
\setcounter{tocdepth}{1}

%% theorems
\numberwithin{equation}{section}
\newtheorem{theorem}[equation]{Theorem}

\newtheorem{proposition}[equation]{Proposition}
\newtheorem{lemma}[equation]{Lemma}
\newtheorem{corollary}[equation]{Corollary}

\theoremstyle{definition}
\newtheorem{rmk}[equation]{Remark}
\newenvironment{remark}[1][]{\begin{rmk}[#1] \pushQED{\qed}}{\popQED \end{rmk}}
\newtheorem{eg}[equation]{Example}

\newtheorem{defnaux}[equation]{Definition}
\newenvironment{definition}[1][]{\begin{defnaux}[#1]\pushQED{\qed}}{\popQED \end{defnaux}}

%% letters

\newcommand{\cA}{\mathcal{A}}
\newcommand{\fA}{\mathfrak{A}}

\newcommand{\BB}{\mathbb{B}}

\newcommand{\fB}{\mathfrak{B}}

\newcommand{\cC}{\mathcal{C}}

\newcommand{\cE}{\mathcal{E}}

\newcommand{\sE}{\mathscr{E}}
\newcommand{\bF}{\mathbf{F}}
\newcommand{\cF}{\mathcal{F}}

\newcommand{\rM}{\mathrm{M}}

\newcommand{\bQ}{\mathbf{Q}}

\newcommand{\cR}{\mathcal{R}}

\newcommand{\bS}{\mathbf{S}}

\newcommand{\fS}{\mathfrak{S}}

\newcommand{\cT}{\mathcal{T}}

\newcommand{\cV}{\mathcal{V}}

\newcommand{\cW}{\mathcal{W}}

\newcommand{\fX}{\mathfrak{X}}

\newcommand{\bZ}{\mathbf{Z}}

%\renewcommand{\bf}{\mathbf{f}}

%\renewcommand{\fi}{\mathfrak{i}}

%\renewcommand{\rm}{\mathrm{m}}

%\renewcommand{\rq}{\mathrm{q}}

%% commands
\newcommand{\arxiv}[1]{\href{http://arxiv.org/abs/#1}{{\tiny\tt arXiv:#1}}}
\newcommand{\DOI}[1]{\href{http://doi.org/#1}{\color{purple}{\tiny\tt DOI:#1}}}

\newcommand{\defn}[1]{\emph{#1}}

\let\ul\underline
\renewcommand{\phi}{\varphi}
\renewcommand{\emptyset}{\varnothing}
\DeclareMathOperator{\im}{im} 
\DeclareMathOperator{\End}{End}

\DeclareMathOperator{\Mat}{Mat}

\DeclareMathOperator{\Hom}{Hom}

\newcommand{\op}{\mathrm{op}}
\renewcommand{\Vec}{\mathrm{Vec}}
\newcommand{\GL}{\mathbf{GL}}

\newcommand{\bbone}{\mathds{1}}

\newcommand{\bone}{\mathbf{1}}
\newcommand{\uotimes}{\mathbin{\ul{\otimes}}}

\newcommand{\FI}{\mathbf{FI}}
\newcommand{\FS}{\mathbf{FS}}

%% uRep command
\contourlength{1pt}

\contourlength{0.8pt}
\newcommand{\myuline}[1]{%
  \uline{\phantom{#1}}%
  \llap{\contour{white}{#1}}%
}
\DeclareMathOperator{\uRep}{\text{\myuline{\rm Rep}}}
\DeclareMathOperator{\uPerm}{\ul{Perm}}

\title[On the representation theory of the symmetry group of the Cantor set]{On the representation theory of \\ the symmetry group of the Cantor set}
\author{Andrew Snowden}
\thanks{The author was supported by NSF grant DMS-2301871.}
\date{April 8, 2024}

\begin{document}

\begin{abstract}
In previous work with Harman, we introduced a new class of representations for an oligomorphic group $G$, depending on an auxiliary piece of data called a measure. In this paper, we look at this theory when $G$ is the symmetry group of the Cantor set. We show that $G$ admits exactly two measures $\mu$ and $\nu$. The representation theory of $(G, \mu)$ is the linearization of the category of $\bF_2$-vector spaces, studied in recent work of the author and closely connected to work of Kuhn and Kov\'acs. The representation theory of $(G, \nu)$ is the linearization of the category of vector spaces over the Boolean semi-ring (or, equivalently, the correspondence category), studied by Bouc--Th\'evenaz. The latter case yields an important counterexample in the general theory.
\end{abstract}

\maketitle
\tableofcontents

\section{Introduction}

\subsection{Background}

Let $G$ be an oligomorphic permutation group, i.e., $G$ acts by permutations on a set $\Omega$ and has finitely many orbits on $\Omega^n$ for all $n$. In recent work with Harman \cite{repst}, we introduced a class of generalized ``permutation modules'' for $G$ depending on an auxiliary piece of data $\mu$ called a measure. These modules form a rigid tensor category $\uPerm_k(G; \mu)$. In certain cases, this category admits an abelian envelope.

For example, if $G$ is the infinite symmetric group then there is a 1-parameter family of measures $\mu_t$ for $G$, and the category $\uPerm_k(G; \mu)$ is (essentially) Deligne's interpolation category $\uRep(\fS_t)$ introduced in \cite{Deligne3}. Other choices for $G$ recover the interpolation categories later studied by Knop \cite{Knop,Knop2}. Still other choices for $G$ lead to entirely new categories. For example, the group of order-preserving bijections of $\bQ$ gives rise to the Delannoy category, studied in depth in \cite{line}; see \cite{circle} for a related example.

In this paper, we apply the theory of \cite{repst} when $G$ is the symmetry group of the Cantor set. This group has a somewhat different nature than groups previously considered in relation to \cite{repst}, and (perhaps because of this) we observe some new phenomena.

\subsection{Results} \label{ss:results}

Let $G$ be the group of self-homeomorphisms of the Cantor set $\fX$. The group $G$ is oligomorphic via its action on the set $\Omega$ of proper non-empty clopen subsets of $\fX$. We summarize our main results.

\textit{(a) Measures.} We show that $G$ admits exactly two measures $\mu$ and $\nu$ (both valued in $\bQ$). To be a bit more precise, we show that the ring $\Theta(G)$ introduced in \cite[\S 4]{repst}, which carries the universal measure for $G$, is isomorphic to $\bZ_{(2)} \times \bQ$.

\textit{(b) Representation theory at $\mu$.} Let $k$ be a field of characteristic~0. We show that the Karoubi envelope of $\uPerm_k(G; \mu)$ is semi-simple and pre-Tannakian. In fact, we show that this category is equivalent to (a slight modification of) the $k$-linearization of the category of finite dimensional $\bF_2$-vector spaces. This category was studied in our recent paper \cite{dblexp}, where it was shown to be semi-simple and pre-Tannakian; this result ultimately rests on work of Kuhn \cite{Kuhn} and Kov\'acs \cite{Kovacs} on the structure of the monoid algebra $k[\rM_n(\bF_2)]$, where $\rM_n(\bF_2)$ is the monoid of $n \times n$ matrices over $\bF_2$. This category is of interest as it was the first example of a pre-Tannakian category of double exponential growth. The simple objects of $\uPerm_k(G;\mu)^{\rm kar}$ are naturally in bijection with simple $k[\GL_n(\bF_2)]$-modules, for $n \ge 1$.

\textit{(c) Representation theory at $\nu$.} We show that the Karoubi envelope of $\uPerm_k(G; \nu)$ is equivalent to (a slight modification of) the $k$-linearization of the category of finite dimensional ``vector spaces'' over the two element Boolean semiring $\BB$. This category has been studied in depth by Bouc and Th\'evenaz \cite{BT1, BT2, BT3, BT4}. The endomorphism rings in $\uPerm_k(G; \nu)$ are (closely related to) the monoid algebras $k[\rM_n(\BB)]$, where $\rM_n(\BB)$ is the monoid of $n \times n$ Boolean matrices. These rings are known to not be semi-simple in general; in fact, Bremner \cite{Bremner} explicitly computes the (non-zero) Jacobson radical for $n=3$. This implies that there are nilpotent endomorphisms in $\uPerm_k(G; \mu)$ with non-zero trace, and so $\uPerm_k(G; \nu)$ does not embed into any pre-Tannakian category. This is the first example of this behavior in the context of oligomorphic groups, and one that we did not expect. See the discussion in \S \ref{ss:nu-thm} for more details.

\subsection{Remarks}

We make a few additional remarks:

(a) The group $G$ has double exponential growth, in the sense that the number of $G$-orbits on $\Omega^n$ has such growth; see Remark~\ref{rmk:growth}. This is the fastest growing oligomorphic group for which measures have been analyzed\footnote{In later work \cite[\S 8]{regcat}, we analyze measures on some faster growing groups.}; all previous cases have growth bounded by $c^{n^2}$ for some constant $c$. It is easy to construct oligomorphic groups with arbitrarily fast growth (see \cite[\S 3.24]{CameronBook}), but challenging to construct measures on fast-growing groups.

(b) The most common way of constructing oligomorphic groups is with Fra\"iss\'e limits. For example, the Fra\"iss\'e limit of the class of finite sets is a countably infinite set, and its automorphism group is the infinite symmetric group; in the context of \cite{repst}, this group leads to Deligne's interpolation category $\uRep(\fS_t)$. Before this paper, all oligomorphic groups studied in relation to \cite{repst} naturally arise naturally from (ordinary) Fra\"iss\'e limits.

The Cantor set can be viewed as the \emph{projective} Fra\"iss\'e limit of the class of finite sets, in the sense of \cite{IS}. Thus, in a way, the group $G$ we study is dual to the infinite symmetric group; this point of view shows that $G$ is a quite natural group to consider in relation to \cite{repst}. Our results suggest that it could be profitable to apply the theory of \cite{repst} to other oligomorphic groups arising from projective Fra\"iss\'e limits.

(c) Let $\cE$ be a semi-simple pre-Tannakian category over a finite field $\bF$, and let $k[\cE]$ be the linearization of $\cE$ over a field $k$ of characteristic~0 (see \cite[\S 2.2]{dblexp}). In \cite{dblexp}, we show that certain closely related categories $k[\cE]^{\sharp}_{\chi}$ are semi-simple pre-Tannakian categories. If $\cE$ arises from an oligomorphic group $H$, then so do the $k[\cE]^{\sharp}_{\chi}$ categories. This paper examines the simplest possible case, namely, when $\bF=\bF_2$ and $H$ is the trivial group, so that $\cE$ is simply the category of finite dimensional vector spaces; in this case, $k[\cE]^{\sharp}_{\chi}$ arises from our group $G$ by \S \ref{ss:results}(b). The case where $\bF=\bF_2$ and $H$ is a general group is discussed in \cite[\S 8]{regcat}.

(d) In \cite{Knop2}, Knop defines a notion of ``degree function'' on a regular category $\cA$. Given a degree function $\delta$, he constructs a tensor category $\cT(\cA, \delta)$. This formalism allowed Knop to easily construct many new interpolation categories. The axioms for a degree function are quite similar to the axioms for measures in the sense of \cite{repst}, though there are important differences in the two approaches.

In \cite[\S 8, Example~3]{Knop2}, Knop discusses the case when $\cA$ is the category of non-empty finite sets. Here there is a unique degree function $\delta$. This case corresponds to our $(G, \nu)$, and Knop's category $\cT(\cA, \delta)$ is closely related to our $\uPerm_k(G; \nu)$. Knop left open the question of whether his category was semi-simple; our results on $\uPerm_k(G; \nu)$ show that it is not.

There is no degree function corresponding to the measure $\mu$. We thus observe a stark difference between Knop's approach and the one in \cite{repst} in this case. This is explained in more detail in \cite{regcat}.

(e) The following three categories are closely related:
\begin{itemize}
\item The category of $\FI$-modules, introduced in \cite{CEF}.
\item The category of smooth (or algebraic) representations of the infinite symmetric group, as studied in, e.g., \cite[\S 6]{infrank}.
\item Deligne's interpolation categories $\uRep(\fS_t)$.
\end{itemize}
There are many connections between these categories; see, for instance, \cite{BEAH}. We would like to have a similar picture relating the following categories:
\begin{itemize}
\item The category of $\FS^{\op}$-modules.
\item The category of smooth representations of the group $G$ studied in this paper.
\item The category $\uPerm_k(G; \mu)$.
\end{itemize}
Here $\FS$ is the category of finite sets and surjective maps. Such a picture could be quite useful in studying the category of $\FS^{\op}$-modules, which has proven to be quite difficult. (See \cite{SST} for some recent work in this direction.)

\subsection{Outline}

In \S \ref{s:bg} we review general material on oligomorphic groups and tensor categories. In \S \ref{s:G} we introduce the group $G$ and study the structure of $G$-sets. In \S \ref{s:meas} we construct and classify measures on $G$. Finally, in \S \ref{s:mu} and \S \ref{s:nu}, we study the representation theories of $G$ at the two measures $\mu$ and $\nu$.

\subsection*{Acknowledgments}

We thank Nate Harman, Sophie Kriz, and Ilia Nekrasov for helpful discussions.

\section{Background} \label{s:bg}

\subsection{Tensor categories}

Let $k$ be a commutative ring. A \defn{tensor category} is an additive $k$-linear category $\cC$ equipped with a symmetric monoidal structure that is $k$-bilinear. Let $X$ be an object of $\cC$ and write $\bbone$ for the unit object of $\cC$. A \defn{dual} of an object $X$ is an object $Y$ equipped with an \defn{evaluation map} $X \otimes Y \to \bbone$ and a \defn{co-evaluation map} $\bbone \to Y \otimes X$ satisfying certain identities; see \cite[\S 2.10]{EGNO}. We say that $X$ is \defn{rigid} if it has a dual, and we say that $\cC$ is \defn{rigid} if all of its objects are.

Now suppose that $k$ is a field. The tensor category $\cC$ is \defn{pre-Tannakian} if the following conditions hold: (a) $\cC$ is abelian and all objects have finite length; (b) all $\Hom$ spaces in $\cC$ are finite dimensional over $k$; (c) $\cC$ is rigid; (d) $\End(\bbone)=k$.

\subsection{Oligomorphic groups}

An \defn{oligomorphic group} is a permutation group $(G, \Omega)$ such that $G$ has finitely many orbits on $\Omega^n$ for all $n \ge 0$. Fix such a group. For a finite subset $A$ of $\Omega$, let $G(A)$ be the subgroup of $G$ fixing each element of $A$. These subgroups form a neighborhood basis of the identity on a topology for $G$. This topology has the following three properties: (a) it is Hausdorff; (b) it is non-archimedean (open subgroups form a neighborhood basis of the identity); and (c) it is Roelcke pre-compact. See \cite[\S 2.2]{repst} for details. An \defn{admissible group} is a topological group satisfying these three conditions. While most admissible groups we care about are oligomorphic, the defining permutation representation will not play a distinguished role, and so it is more natural to simply work with the topological group.

Let $G$ be an admissible group. We say that an action of $G$ on a set $X$ is \defn{smooth} if the stabilizer in $G$ of any point in $X$ is an open subgroup. We use the term ``$G$-set'' to mean ``set equipped with a smooth action of $G$.'' We say that a $G$-set is \defn{finitary} if it has finitely many orbits. We let $\bS(G)$ be the category of finitary $G$-sets. This category is closed under fiber products. See \cite[\S 2.3]{repst} for details.

\subsection{Measures} \label{ss:meas}

Let $G$ be an admissible group. The following is a key concept introduced in \cite{repst}:

\begin{definition} \label{defn:meas}
A \defn{measure} for $G$ valued in a commutative ring $k$ is a rule $\mu$ assigning to each map $f \colon X \to Y$ in $\bS(G)$, with $Y$ transitive, a value $\mu(f)$ in $k$ such that the following axioms hold:
\begin{enumerate}
\item If $f$ is an isomorphism then $\mu(f)=1$.
\item Suppose $X=X_1 \amalg X_2$ is a decomposition of $G$-sets, and let $f_i$ be the restriction of $f$ to $X_i$. Then $\mu(f)=\mu(f_1)+\mu(f_2)$.
\item If $g \colon Y \to Z$ is a map of transitive $G$-sets then $\mu(g \circ f)=\mu(g) \cdot \mu(f)$.
\item Let $Y' \to Y$ be a map of transitive $G$-sets, and let $f' \colon X' \to Y'$ be the base change of $f$. Then $\mu(f')=\mu(f)$. \qedhere
\end{enumerate}
\end{definition}

\begin{remark}
Definiton~\ref{defn:meas} is taken from \cite[\S 4.5]{repst}. It is equivalent to \cite[Definition~3.1]{repst} by \cite[Proposition~4.10]{repst}.
\end{remark}

Let $\mu$ be a measure. For a finitary $G$-set $X$, we put $\mu(X)=\mu(X \to \bone)$, where $\bone$ is the 1-point $G$-set. We note that $\mu$ is additive, in the sense that $\mu(X \amalg Y)=\mu(X)+\mu(Y)$. If $f \colon X \to Y$ is a map of transitive $G$-sets then by considering the composition $X \to Y \to \bone$, we find $\mu(f) \cdot \mu(Y)=\mu(X)$.

We say that $\mu$ is \defn{regular} if $\mu(f)$ is a unit of $k$ whenever $f \colon X \to Y$ is a map of transitive $G$-sets. Suppose this is the case. By the previous paragraph, we find $\mu(f)=\mu(X) \mu(Y)^{-1}$, and so $\mu$ is determined by its values on $G$-sets. One easily sees that $\mu$ satisfies the identity
\begin{equation} \label{eq:reg}
\mu(X \times_W Y) = \mu(X) \cdot \mu(Y) \cdot \mu(W)^{-1},
\end{equation}
if $X \to W$ and $Y \to W$ are maps of transitive $G$-sets. In fact, this identity essentially characterizes regular measures. Precisely, if $\mu$ is a rule that assigns to each transitive $G$-set a unit of $k$ such that
\begin{enumerate}[(i)]
\item $\mu$ is invariant under isomorphism
\item $\mu(\bone)=1$
\item $\mu$ satisfies \eqref{eq:reg}, where we extend $\mu$ additively on the left side
\end{enumerate}
then $\mu$ defines a regular measure for $G$. See \cite[\S 4.6]{repst} for details.

Understanding measures for $G$ can be difficult in general. The following object can help with this problem.

\begin{definition}
We define a commutative ring $\Theta(G)$ as follows. For each map $f \colon X \to Y$ in $\bS(G)$, with $Y$ transitive, there is a class $[f]$ in $\Theta(G)$. These classes satisfy the following relations, using notation as in Definition~\ref{defn:meas}:
\begin{enumerate}
\item If $f$ is an isomorphism then $[f]=1$.
\item We have $[f]=[f_1]+[f_2]$.
\item We have $[g \circ f] = [g] \cdot [f]$.
\item We have $[f']=[f]$
\end{enumerate}
More precisely, $\Theta(F)=R/I$, where $R$ is the polynomial ring in symbols $[f]$ and $I$ is the ideal generated by the above relations.
\end{definition}

There is a measure $\mu_{\rm univ}$ valued in $\Theta(G)$ defined by $\mu_{\rm univ}(f)=[f]$. This measure is universal in the sense that if $\mu$ is a measure valued in a ring $k$ then there is a unique ring homomorphism $\phi \colon \Theta(G) \to k$ such that $\mu = \phi \circ \mu_{\rm univ}$. We thus see that understanding all measures for $G$ amounts to computing the ring $\Theta(G)$.

\subsection{Push-forwards} \label{ss:push}

Fix an admissible group $G$ and a measure $\mu$ for $G$ valued in a commutative ring $k$. For a finitary $G$-set $X$, we let $\cF(X)$ be the space of $G$-invariant functions $X \to k$. Equivalently, $\cF(X)$ is the space of functions $G \backslash X \to k$, which shows that $\cF(X)$ is a free $k$-module of finite rank. For a $G$-stable subset $W \subset X$, we let $1_W \in \cF(X)$ be the indicator function of $W$. The functions $1_W$, with $W$ a $G$-orbit on $X$, are a $k$-basis for $\cF(X)$. We note that $\cF(X)$ is the $G$-invariant subspace of the Schwartz space $\cC(X)$ discussed in \cite[\S 3.2]{repst}.

Suppose $f \colon X \to Y$ is a morphism in $\bS(G)$. We define a $k$-linear map $f_* \colon \cF(X) \to \cF(Y)$ by $f_*(1_W) = c \cdot 1_{f(W)}$, where $W$ is a $G$-orbit on $X$, and $c=\mu(f \colon W \to f(W))$. One should think of $f_*$ as ``integration over the fibers of $f$,'' and this is literally true according to the approach in \cite{repst}. This push-forward operation has many of the properties one would expect; see \cite[\S 3.4]{repst} for details.

\subsection{Matrices}

Maintain the set-up from \S \ref{ss:push}. Let $Y$ and $X$ be finitary $G$-sets. A \defn{$(Y \times X)$-matrix} is simply an element of $\cF(Y \times X)$. More generally, one could allow functions in the larger space $\cC(Y \times X)$, as in \cite{repst}, but this is unnecessary for our purposes. We write $\Mat_{Y,X}$ for the space of such matrices.

Suppose $A \in \Mat_{Y,X}$ and $B \in \Mat_{Z,Y}$. We define the \defn{product matrix} $BA \in \Mat_{Z,X}$ to be $(p_{13})_*(p_{23}^*(B) \cdot p_{12}^*(A))$. Here $p_{ij}$ is the projection from $Z \times Y \times X$ onto the $i$ and $j$ factors. Explicitly, $p_{23}^*(B) \cdot p_{12}^*(A)$ is the function $(z,y,x) \mapsto B(z,y) \cdot A(y,x)$. Matrix multiplication is associative, and the identity matrix $1_X \in \Mat_{X,X}$ is the identity for matrix multiplication. In particular, we see that $\Mat_{X,X}$ is an associative $k$-algebra.

There is one other operation on matrices we mention. Suppose $A \in \Mat_{Y,X}$ and $A' \in \Mat_{Y',X'}$. We define the \defn{Kronecker product} of $A$ and $A'$ to be the $(Y \times Y') \times (X \times X')$ matrix given by $(y,y',x,x') \mapsto A(y,x) \cdot A'(y',x')$.

\subsection{Permutation modules}

Maintain the set-up from \S \ref{ss:push}. In \cite[\S 8]{repst}, we define a $k$-linear tensor category $\uPerm_k(G;\mu)$ of ``permutation modules.'' We briefly summarize its definition and salient properties:
\begin{itemize}
\item The objects are formal symbols $\Vec_X$ where $X$ is a finitary $G$-set.
\item The morphisms $\Vec_X \to \Vec_Y$ are $(Y \times X)$-matrices.
\item Composition of morphisms is given by matrix multiplication.
\item Direct sums are given on objects by $\Vec_X \oplus \Vec_Y=\Vec_{X \oplus Y}$, and on morphisms by the usual block diagonal matrices.
\item The tensor product $\uotimes$ is given on objects by $\Vec_X \uotimes \Vec_Y=\Vec_{X \times Y}$, and on morphisms by the Kronecker product.
\item Every object is rigid and self-dual.
\end{itemize}
The category $\uPerm_k(G;\mu)$ is almost never abelian. However, in many cases it does admit an abelian envelope; this is discussed in \cite[Part~III]{repst}.

\subsection{The relative case}

A \defn{stabilizer class} in $G$ is a collection $\sE$ of subgroups satisfying the following conditions: (a) $\sE$ contains $G$; (b) $\sE$ is closed under conjugation; (c) $\sE$ is closed under finite intersections; (d) $\sE$ is a neighborhood basis of the identity. We say that a $G$-set is \defn{$\sE$-smooth} if every stabilizer belongs to $\sE$, and we write $\bS(G, \sE)$ for the category of finitary $\sE$-smooth $G$-sets. See \cite[\S 2.6]{repst} for more details.

Fix a stabilizer class $\sE$. The definitions we have made above for $G$ generalize to $(G,\sE)$ by simply considering $\sE$-smooth sets throughout; we call this the ``relative case'' in \cite{repst}. In particular, a measure for $(G,\sE)$ assigns to each morphism $f \colon X \to Y$ in $\bS(G,\sE)$, with $Y$ transitive, a quantity $\mu(f) \in k$, satisfying analogous conditions to those in Definition~\ref{defn:meas}. There is again a universal measure valued in a ring $\Theta(G,\sE)$.

\section{The group $G$} \label{s:G}

\subsection{Definition}

Let $\fX$ be a Cantor space, that is, a topological space homeomorphic to the Cantor set. Let $G$ be the group of all self-homeo\-morphisms of $\fX$. For a subset $\fA$ of $\fX$, let $G\{\fA\}$ be the subgroup of $G$ consisting of elements $g$ such that $g\fA=\fA$. Let $\sE$ be all subgroups of $G$ of the form $G\{\fA_1\} \cap \cdots \cap G\{\fA_n\}$ where $\fA_1, \ldots, \fA_n$ are clopen subsets of $\fX$. We topologize $G$ by taking $\sE$ to be a neighborhood basis of the identity. We will see below that $G$ is an admissible topological group. This group is our primary object of study.

\begin{remark}
A possible choice for $\fX$ is $\bZ_p$, the space of $p$-adic integers. In this model, the clopen sets have the form $\sum_{a \in S} (a+p^n \bZ_p)$, where $S$ is a subset of $\bZ/p^n \bZ$.
\end{remark}

\subsection{The $X$ sets}

For a non-empty finite set $A$, let $X(A)$ be the set of all surjective continuous maps $\fX \to A$, where $A$ is given the discrete topology. The group $G$ naturally acts on $X(A)$. We also put $X(n)=X([n])$ for $n \ge 1$, where $[n]=\{1,\ldots,n\}$. We can identify $X(n)$ with the set of tuples $(\fA_1, \ldots, \fA_n)$ where the $\fA_i$ are non-empty disjoint clopen subsets of $\fX$ such that $\fX=\bigcup_{i=1}^n \fA_i$.

\begin{proposition}
The action of $G$ on $X(A)$ is transitive.
\end{proposition}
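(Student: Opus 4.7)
The plan is to reduce the statement to Brouwer's topological characterization of the Cantor set: a non-empty topological space is homeomorphic to $\fX$ if and only if it is compact, metrizable, totally disconnected, and has no isolated points.

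First I would reformulate the statement combinatorially. A surjective continuous map $f \colon \fX \to A$ (with $A$ discrete) is the same data as an ordered partition $(\fA_a)_{a \in A}$ of $\fX$ into non-empty clopen pieces, via $\fA_a = f^{-1}(a)$; the $G$-action on $X(A)$ is the obvious action on such partitions. So transitivity amounts to the following: given two $A$-indexed partitions $(\fA_a)_{a \in A}$ and $(\fB_a)_{a \in A}$ of $\fX$ into non-empty clopen sets, there is some $g \in G$ with $g\fA_a = \fB_a$ for all $a \in A$.

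Next I would observe that every non-empty clopen subset $\fC \subseteq \fX$ is itself homeomorphic to $\fX$. Indeed, $\fC$ inherits compactness and metrizability from $\fX$, is totally disconnected because $\fX$ is, and has no isolated points because $\fC$ is open in $\fX$ (which has no isolated points). By Brouwer's theorem $\fC \cong \fX$. Applying this to each $\fA_a$ and each $\fB_a$, I can choose homeomorphisms $g_a \colon \fA_a \to \fB_a$ for every $a \in A$.

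Finally, I would glue these into a single homeomorphism. Since the $\fA_a$ form a finite clopen partition of $\fX$ and likewise for the $\fB_a$, the map $g \colon \fX \to \fX$ defined piecewise by $g|_{\fA_a} = g_a$ is a well-defined bijection; continuity of $g$ and of $g^{-1}$ follows from the pasting lemma for finite closed (equivalently, clopen) covers. Thus $g \in G$ and by construction $g\fA_a = \fB_a$ for all $a$, proving transitivity. There is no real obstacle here; the only nontrivial input is Brouwer's characterization of $\fX$, and everything else is a straightforward gluing argument.
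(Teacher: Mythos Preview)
Your proof is correct and follows essentially the same approach as the paper's: both reduce to the fact that a non-empty clopen subset of $\fX$ is again a Cantor space, pick homeomorphisms between corresponding pieces of the two partitions, and glue. The paper's version is terser (it asserts without elaboration that non-empty clopens are Cantor spaces and that the glued map is a homeomorphism), whereas you spell out Brouwer's characterization and the pasting lemma, but the argument is the same.
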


\begin{proof}
Consider two elements $\phi \colon \fX \to A$ and $\psi \colon \fX \to A$ of $X(A)$. Let $\fA_i=\phi^{-1}(i)$ and $\fB_i=\psi^{-1}(i)$. These are non-empty clopen subsets of $\fX$, and thus Cantor spaces; let $\sigma_i \colon \fA_i \to \fB_i$ be a homeomorphism. Let $\sigma$ be the self-homeomorphism of $\fX$ defined by $\sigma \vert_{\fA_i} = \sigma_i$. Then $\phi = \psi \circ \sigma$, which completes the proof.
\end{proof}

\tikzset{leaf/.style={circle,fill=black,draw,minimum size=1mm,inner sep=0pt}}
\tikzset{omit/.style={circle,gray,draw,minimum size=1mm,inner sep=0pt}}
\begin{figure}
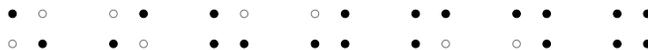

\begin{displaymath}
\tikz{
\node[omit] () at (0,0) {};
\node[leaf] () at (.4,0) {};
\node[leaf] () at (0,.4) {};
\node[omit] () at (.4,.4) {};
}
\qquad
\tikz{
\node[leaf] () at (0,0) {};
\node[omit] () at (.4,0) {};
\node[omit] () at (0,.4) {};
\node[leaf] () at (.4,.4) {};
}
\qquad
\tikz{
\node[leaf] () at (0,0) {};
\node[leaf] () at (.4,0) {};
\node[leaf] () at (0,.4) {};
\node[omit] () at (.4,.4) {};
}
\qquad
\tikz{
\node[leaf] () at (0,0) {};
\node[leaf] () at (.4,0) {};
\node[omit] () at (0,.4) {};
\node[leaf] () at (.4,.4) {};
}
\qquad
\tikz{
\node[leaf] () at (0,0) {};
\node[omit] () at (.4,0) {};
\node[leaf] () at (0,.4) {};
\node[leaf] () at (.4,.4) {};
}
\qquad
\tikz{
\node[omit] () at (0,0) {};
\node[leaf] () at (.4,0) {};
\node[leaf] () at (0,.4) {};
\node[leaf] () at (.4,.4) {};
}
\qquad
\tikz{
\node[leaf] () at (0,0) {};
\node[leaf] () at (.4,0) {};
\node[leaf] () at (0,.4) {};
\node[leaf] () at (.4,.4) {};
}
\end{displaymath}
\caption{The seven ample subsets of $[2] \times [2]$.}
\label{fig:ample}
\end{figure}

Given a surjection of finite sets $f \colon A \to B$, there is an induced map $X(f) \colon X(A) \to X(B)$. The map $X(f)$ is clearly $G$-equivariant, and thus surjective by the above proposition. If $A$ and $B$ are finite sets, we say that a subset of $A \times B$ is \defn{ample} if it surjects onto $A$ and $B$ under the projection maps. See Figure~\ref{fig:ample} for an example.

\begin{proposition} \label{prop:Xprod}
Let $A \to C$ and $B \to C$ be surjections of finite sets. Then the fiber product $X(A) \times_{X(C)} X(B)$ is naturally isomorphic to $\coprod_D X(D)$, where the coproduct is taken over ample subsets $D$ of $A \times_C B$.
\end{proposition}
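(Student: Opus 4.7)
\medskip

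\noindent\textbf{Proof plan.} The plan is to write down an explicit forward map from the fiber product to $\coprod_D X(D)$ by taking the image of the combined map, and then construct an inverse by post-composing with the two projections.

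\medskip

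\noindent\textbf{The forward map.} Let $(\phi,\psi) \in X(A) \times_{X(C)} X(B)$, so $\phi\colon \fX \to A$ and $\psi\colon \fX \to B$ are surjective continuous maps whose composites to $C$ agree. I would form the continuous map $(\phi,\psi)\colon \fX \to A \times B$; by hypothesis, it lands in the fiber product $A \times_C B$. Let $D \subseteq A \times_C B$ be its image; since $\fX$ is compact and $D$ is finite discrete, the resulting map $\theta\colon \fX \to D$ is a surjective continuous map, i.e., an element of $X(D)$. Moreover $D$ projects onto $A$ (since $\phi$ is surjective) and onto $B$ (since $\psi$ is surjective), so $D$ is ample. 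This defines a $G$-equivariant map
\[
\Phi \colon X(A) \times_{X(C)} X(B) \longrightarrow \coprod_{D \text{ ample}} X(D), \qquad (\phi,\psi) \longmapsto \theta \in X(D).
\]

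\medskip

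\noindent\textbf{The inverse map.} In the other direction, suppose $D \subseteq A \times_C B$ is ample and $\theta \in X(D)$. Let $\pi_A \colon D \to A$ and $\pi_B \colon D \to B$ be the projections; these are surjective by ampleness of $D$. Then $\phi := \pi_A \circ \theta$ and $\psi := \pi_B \circ \theta$ are surjective continuous maps $\fX \to A$ and $\fX \to B$, and the fact that $D \subseteq A \times_C B$ forces the composites of $\phi$ and $\psi$ to $C$ to coincide. This gives a $G$-equivariant map $\Psi$ in the opposite direction.

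\medskip

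\noindent\textbf{Inverse check.} Finally, I would check $\Phi$ and $\Psi$ are mutually inverse. Starting from $(\phi,\psi)$, the pair $(\pi_A,\pi_B) \circ \theta$ recovers $(\phi,\psi)$ tautologically, since by construction $\theta$ is just $(\phi,\psi)$ viewed as a map to its image. Starting from $\theta \in X(D)$, the image of $(\pi_A \circ \theta, \pi_B \circ \theta) \colon \fX \to A \times_C B$ is precisely $\pi_A\theta \times_C \pi_B \theta$ restricted to pairs coming from $\fX$; since $\theta$ is surjective onto $D$ and $D \hookrightarrow A \times_C B$ is injective, this image is exactly $D$, and the induced surjection $\fX \twoheadrightarrow D$ is $\theta$ itself. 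So $\Phi$ and $\Psi$ are inverse bijections of $G$-sets. I do not foresee a serious obstacle: the only subtlety is remembering to decompose $X(A)\times_{X(C)}X(B)$ according to the image subset $D$, which forces ``ampleness'' as the precise condition on subsets of $A\times_C B$ that can arise.
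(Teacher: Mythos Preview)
Your proof is correct and takes essentially the same approach as the paper: both arguments identify the piece of $X(A)\times_{X(C)}X(B)$ corresponding to $D$ as those pairs $(\phi,\psi)$ for which the image of $(\phi,\psi)\colon \fX\to A\times_C B$ equals $D$, and use the projections $D\to A$, $D\to B$ to build the inverse. The paper phrases this a bit more tersely---constructing only the map $X(D)\to X(A)\times_{X(C)}X(B)$ and observing its image and injectivity---but the content is the same.
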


\begin{proof}
Suppose that $D$ is an ample subset of $A \times_C B$. Applying the functor $X$ to the two projections $D \to A$ and $D \to B$, we obtain a map $X(D) \to X(A) \times_{X(C)} X(B)$, which is clearly injective; we thus regard $X(D)$ as a subset of $X(A) \times_{X(C)} X(B)$. One easily sees that it consists of those pairs $(\phi, \psi)$ such that the image of the map $\phi \times \psi \colon \fX \to A \times_C B$ is $D$; in particular, we see that $X(D)$ and $X(D')$ are disjoint if $D$ and $D'$ are distinct. If $(\phi, \psi)$ is an arbitrary element of $X(A) \times_{X(C)} X(B)$ then the image $D$ of $\phi \times \psi$ is an ample subset of $A \times_C B$, and so $(\phi, \psi)$ belongs to $X(D)$. The result follows.
\end{proof}

\begin{proposition} \label{prop:G-admissible}
The topological group $G$ is admissible.
\end{proposition}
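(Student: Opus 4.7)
The plan is to verify the three defining conditions of admissibility: Hausdorff, non-archimedean, and Roelcke pre-compact. Before checking these, I would first note that $\sE$ does define a group topology: it is closed under finite intersection by definition, and closed under conjugation because $gG\{\fA\}g^{-1}=G\{g\fA\}$ and $g\fA$ is clopen whenever $\fA$ is. It will also be convenient to observe that any element of $\sE$ can be refined: given $\fA_1,\ldots,\fA_n$ clopen, the finite Boolean algebra they generate has finitely many atoms $\fB_1,\ldots,\fB_m$, these atoms form a clopen partition of $\fX$, and $G\{\fA_1\}\cap\cdots\cap G\{\fA_n\}=G\{\fB_1\}\cap\cdots\cap G\{\fB_m\}$. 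Thus every element of $\sE$ has the form $G\{\fB_1,\ldots,\fB_m\}$ for some $(\fB_1,\ldots,\fB_m)\in X(m)$ (discarding empty parts).

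The Hausdorff axiom is routine: given $g\neq e$, pick $x\in\fX$ with $gx\neq x$, use zero-dimensionality of the Cantor set to find a clopen $\fA\ni x$ with $gx\notin\fA$, and conclude $g\fA\neq\fA$, so $g\notin G\{\fA\}$. The non-archimedean axiom is immediate: every element of $\sE$ is by definition a subgroup, and $\sE$ is a neighborhood basis of the identity.

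The main content is Roelcke pre-compactness, which says every double coset space $U\backslash G/U$ is finite. Using the refinement observation above, it suffices to consider $U=G\{\fA_1,\ldots,\fA_n\}$ for $(\fA_1,\ldots,\fA_n)\in X(n)$. The transitivity proposition gives a $G$-equivariant bijection $G/U\cong X(n)$, so $U\backslash G/U$ is in bijection with the $G$-orbits on $X(n)\times X(n)$. Since $X(1)$ is a one-point set (the unique surjection $\fX\to[1]$), we have $X(n)\times X(n)=X(n)\times_{X(1)}X(n)$, and Proposition~\ref{prop:Xprod} applied to the surjections $[n]\to[1]$ decomposes this as $\coprod_D X(D)$ with $D$ ranging over the ample subsets of $[n]\times_{[1]}[n]=[n]\times[n]$. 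Each $X(D)$ is a single $G$-orbit by transitivity, and there are finitely many ample subsets of $[n]\times[n]$ (at most $2^{n^2}$), so $|U\backslash G/U|<\infty$.

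The only real step that requires care is wiring Roelcke pre-compactness into the combinatorics of Proposition~\ref{prop:Xprod}; the rest is point-set bookkeeping. Once one has made the preliminary reduction that neighborhoods may be taken to come from clopen partitions, the key identity $G/U\cong X(n)$ and the decomposition of $X(n)\times X(n)$ do all the work.
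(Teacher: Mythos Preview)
Your proof is correct and follows essentially the same strategy as the paper: verify Hausdorff by separating $x$ from $gx$ with a clopen set, note non-archimedean is immediate, and deduce Roelcke pre-compactness from Proposition~\ref{prop:Xprod}. The only difference is cosmetic: you first refine an arbitrary member of $\sE$ to the stabilizer of a clopen partition, giving an exact identification $G/U\cong X(n)$, whereas the paper instead observes $G/G\{\fA\}\cong X(2)$ and embeds $G/U$ into $X(2)^n$; both routes then count orbits via Proposition~\ref{prop:Xprod}.
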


\begin{proof}
The topology on $G$ is non-archimedean by definition. We now show that it is Hausdorff. For this, it suffices to show that $K=\bigcap G\{\fA\}$ is trivial, where the intersection is over all clopen subsets $\fA$. Suppose $g \in G$ is not the identity, and let $x \in \fX$ be such that $g(x) \ne x$. Pick a clopen subset $\fA$ of $\fX$ containing $x$ and not containing $gx$, which is possible since $\fX$ is Hausdorff and every point has a neighborhood basis of clopen sets. We thus see that $g\fA \ne \fA$, and so $g \not\in K$. Hence $K=1$, as required.

It remains to show that $G$ is Roelcke pre-compact. First, observe that if $\fA$ is a proper non-empty clopen subset of $\fX$ then $G/G\{\fA\}$ is isomorphic to $X(2)$ as a $G$-set. We thus see that if $U$ is any open subgroup in $\sE$ then $G/U$ is isomorphic to an orbit on $X(2)^n$ for some $n$. Now, let $U$ and $V$ be arbitrary open subgroups of $G$, let $U_0$ and $V_0$ be members of $\sE$ such that $U_0 \subset U$ and $V_0 \subset V$, and let $n$ and $m$ be such that $G/U_0 \subset X(2)^n$ and $G/V_0 \subset X(2)^m$. We have
\begin{displaymath}
U_0 \backslash G/V_0 = G \backslash (G/U_0 \times G/V_0) \subset G \backslash X(2)^{n+m}.
\end{displaymath}
Since $X(2)^{n+m}$ has finitely many orbits by Proposition~\ref{prop:Xprod}, we see that the above set is finite. Since $U \backslash G/V$ is a quotient of this set, it too is finite.
\end{proof}

\begin{remark} \label{rmk:growth}
The above proof shows that $G$ is oligomorphic via its action on $X(2)$. Let $f(n)$ be the number of $G$-orbits on $X(2)^n$; this is the number of ample subsets of $[2]^n$, i.e., subsets surjecting onto each of the $n$ factors. We have $f(1)=1$, $f(2)=7$, $f(3)=193$, and $f(4)=63775$. Since any subset of $[2]^n$ containing the points $(1, \ldots, 1)$ and $(2, \ldots, 2)$ is ample, we have $f(n) \ge \tfrac{1}{4} \cdot 2^{2^n}$. The sequence $f(n)$ is discussed at \cite{OEIS}.
\end{remark}

\begin{remark}
The group $G$ acts oligomorphically on the set $\fX$ as well. This action is highly transitive, that is, it is $n$-transitive for all $n$. It follows that $G$ is dense in the symmetric group on $\fX$. Thus, if we let $G'$ be the group $G$ equipped with the topology coming from its action on $\fX$, then we find that $\bS(G')$ is equivalent to $\bS(\fS)$, where $\fS$ is the infinite symmetric group. This implies that measures (and the resulting tensor categories) for $G'$ are the same as for $\fS$.
\end{remark}

\subsection{The $Y$ sets}

We now introduce a variant of the sets $X(A)$ that are sometimes more convenient. For a finite set $A$ define $Y(A)$ to be the set of all continuous maps $\fX \to A$. By considering the possible images of such maps, we find $Y(A) = \coprod_C X(C)$, where the coproduct is taken over non-empty subsets $C$ of $A$. We also put $Y(n)=Y([n])$, for $n \ge 0$. If $f \colon A \to B$ is any map of finite sets then there is an induced map $Y(f) \colon Y(A) \to Y(B)$; precisely, the restriction of $Y(f)$ to $X(C)$ is $X(f \vert_C) \colon X(C) \to X(f(C))$. One easily sees that $f \mapsto Y(f)$ is compatible with composition.

\begin{proposition} \label{prop:Yprod}
Let $A \to C$ and $B \to C$ be surjections of finite sets. Then the natural map  $Y(A \times_C B) \to Y(A) \times_{Y(C)} Y(B)$ is an isomorphism.
\end{proposition}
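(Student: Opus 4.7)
The plan is to write down an explicit two-sided inverse of the natural map and verify both triangles, essentially unwinding the universal property of the fiber product. Write $f \colon A \to C$ and $g \colon B \to C$ for the given surjections, and $p_A, p_B$ for the projections from $A \times_C B$ onto $A$ and $B$. By construction, the natural map sends a continuous map $\chi \colon \fX \to A \times_C B$ to the pair $(p_A \circ \chi, p_B \circ \chi)$; this pair lies in $Y(A) \times_{Y(C)} Y(B)$ because the relation $f \circ p_A = g \circ p_B$ holds on $A \times_C B$.

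For the reverse direction, take a pair $(\phi, \psi) \in Y(A) \times_{Y(C)} Y(B)$. The fiber-product condition says $f \circ \phi = g \circ \psi$ as maps $\fX \to C$, so for every $x \in \fX$ the pair $(\phi(x), \psi(x))$ lies in $A \times_C B$. Hence the set-theoretic map $\phi \times \psi \colon \fX \to A \times B$ factors through the subset $A \times_C B \subset A \times B$. Continuity is automatic: $A \times B$ is finite and discrete, so $\phi \times \psi$ is continuous (each fiber is $\phi^{-1}(a) \cap \psi^{-1}(b)$, a clopen set), and restricting the target to $A \times_C B$ keeps it continuous. This produces an element of $Y(A \times_C B)$.

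A direct check shows the two assignments are mutually inverse: applying $p_A$ and $p_B$ to $\phi \times \psi$ recovers $\phi$ and $\psi$, while $(p_A \circ \chi) \times (p_B \circ \chi) = \chi$ as maps into $A \times_C B$. There is no real obstacle; the proposition is just the universal property of the fiber product applied to continuous maps into the finite discrete set $A \times_C B$, and no input from \S \ref{s:G} beyond the definition of $Y$ is needed.
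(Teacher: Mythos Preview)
Your proof is correct and follows exactly the same idea as the paper's own proof, which is the one-line observation that this is the universal property of the fiber product for continuous maps from $\fX$. You have simply unpacked that universal property by writing down the inverse map explicitly and checking both triangles, which is a perfectly fine (and more detailed) rendering of the same argument.
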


\begin{proof}
This is simply the mapping property for fiber products: giving a continuous map $\fX \to A \times_C B$ is equivalent to giving continuous maps $\fX \to A$ and $\fX \to B$ with equal compositions to $C$.
\end{proof}

\subsection{Classification of $\sE$-smooth $G$-sets}

It is clear that $\sE$ is a stabilizer class in $G$, and that $X(A)$ is an $\sE$-smooth $G$-set. The following proposition gives a complete description of the category $\bS(G,\sE)$ of $\sE$-smooth $G$-sets.

\begin{proposition} \label{prop:E-smooth}
We have the following:
\begin{enumerate}
\item Every transitive $\sE$-smooth $G$-set is isomorphic to some $X(n)$.
\item Every $G$-equivariant map $X(A) \to X(B)$ has the form $X(f)$ for a unique surjection $f \colon A \to B$.
\end{enumerate}
\end{proposition}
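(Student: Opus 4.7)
The plan is to reduce both statements to a Boolean-algebra trick plus the homogeneity of the Cantor space, in the form of the transitivity of $G$ on $X(A)$ already established earlier in this section.

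\emph{Part (a).} Let $T$ be a transitive $\sE$-smooth $G$-set, so $T \cong G/H$ with $H = G\{\fB_1\} \cap \cdots \cap G\{\fB_m\}$ for some clopens $\fB_j \subseteq \fX$. Let $\fA_1,\ldots,\fA_n$ enumerate the non-empty atoms of the (finite) Boolean algebra of clopens generated by the $\fB_j$, i.e., the non-empty sets of the form $\bigcap_{j\in S} \fB_j \cap \bigcap_{j\notin S}(\fX\setminus\fB_j)$ for $S\subseteq[m]$. These form a partition of $\fX$ into non-empty clopens, so $(\fA_1,\ldots,\fA_n) \in X(n)$. Each $\fB_j$ is a union of atoms and each atom is a Boolean combination of the $\fB_j$; it follows that $H = G\{\fA_1\}\cap\cdots\cap G\{\fA_n\}$. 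But the right-hand side is exactly the $G$-stabilizer of $(\fA_1,\ldots,\fA_n)\in X(n)$, so the transitivity of $G$ on $X(n)$ gives $T \cong G/H \cong X(n)$.

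\emph{Part (b).} Let $F\colon X(A)\to X(B)$ be $G$-equivariant, fix $\phi\in X(A)$ corresponding to a partition $(\fA_a)_{a\in A}$, and let $\psi=F(\phi)$ correspond to $(\fB_b)_{b\in B}$. Equivariance yields the stabilizer containment $\bigcap_a G\{\fA_a\}\subseteq \bigcap_b G\{\fB_b\}$. The key claim is that every $\fB_b$ is a union of some of the $\fA_a$. Assume not: some $\fA_a$ meets both $\fB_b$ and $\fX\setminus\fB_b$ in non-empty clopens $\fU:=\fA_a\cap\fB_b$ and $\fA_a\setminus\fU$. Since $\fA_a$ is itself a Cantor space, the transitivity proposition applied internally to $\fA_a$ yields a self-homeomorphism $h$ of $\fA_a$ with $h(\fU)\neq\fU$ (indeed, the self-homeomorphism group of $\fA_a$ acts transitively on $X(2)$ for $\fA_a$, and that set has more than one point). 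Extending $h$ by the identity on $\fX\setminus\fA_a$ produces $g\in\bigcap_a G\{\fA_a\}$ with $g\fB_b\neq\fB_b$, contradicting the containment. We therefore obtain a well-defined map $f\colon A\to B$ with $\fA_a\subseteq\fB_{f(a)}$, surjective because each $\fB_b$ is non-empty, and with $F(\phi)=X(f)(\phi)$ by construction. Transitivity of $G$ on $X(A)$ together with $G$-equivariance of $F$ and $X(f)$ then forces $F=X(f)$ on all of $X(A)$. For uniqueness, evaluating $X(f_1)=X(f_2)$ at $\phi$ recovers the partition blocks on each side, and non-empty disjointness of the $\fA_a$ forces $f_1=f_2$.

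The main obstacle is the homogeneity step in (b): one must know that every non-empty clopen subset of a Cantor space is again a Cantor space, so that the transitivity proposition is applicable internally on $\fA_a$ and can supply the witness $h$. This is standard (such a subset is compact, metrisable, totally disconnected, and inherits the absence of isolated points), and with it in hand both parts are short.
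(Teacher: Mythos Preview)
Your proof is correct, but it takes a genuinely different route from the paper's. The paper leans on Proposition~\ref{prop:Xprod} (the fiber-product decomposition of $X(A)\times_{X(C)}X(B)$ into $X(D)$'s over ample $D$) for both halves: for (a), it embeds $G/U$ as an orbit on $X(2)^n$ and then reads off the orbit structure from Proposition~\ref{prop:Xprod}; for (b), it looks at the graph $\Gamma\subset X(A)\times X(B)$, identifies it via Proposition~\ref{prop:Xprod} as $X(E)$ for an ample $E\subset A\times B$, and then argues that the projection $E\to A$ is a bijection because $\Gamma\times_{X(A)}\Gamma$ is transitive. Your argument bypasses Proposition~\ref{prop:Xprod} entirely: in (a) you pass to the atoms of the Boolean algebra generated by the defining clopens, and in (b) you use a direct stabilizer-containment plus a concrete homogeneity witness on a single piece $\fA_a$. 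The trade-off is that your argument is more elementary and self-contained (it only needs the transitivity of $G$ on $X(n)$ and the fact that a non-empty clopen in a Cantor space is again a Cantor space), while the paper's argument is more structural and reuses machinery that is needed anyway for the measure computations and the classification of general $G$-sets later in \S\ref{s:G}. Your explicit verification of uniqueness in (b) is also a small bonus over the paper's ``one easily sees.''
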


\begin{proof}
(a) Let $X$ be a transitive $\sE$-smooth $G$-set. Then $X$ is isomorphic to $G/U$ for some $U \in \sE$. By the reasoning from Proposition~\ref{prop:G-admissible}, we see that $X$ is an orbit on $X(2)^n$ for some $n$. By Proposition~\ref{prop:Xprod}, it thus follows that $X \cong X(n)$ for some $n$.

(b) Consider a $G$-equivariant map $\phi \colon X(A) \to X(B)$. Let $\Gamma \subset X(A) \times X(B)$ be the graph of $\phi$. Note that the projection $\Gamma \to X(A)$ is an isomorphism; in particular, $\Gamma$ is a transitive $G$-set. By Proposition~\ref{prop:Xprod}, we have $\Gamma=X(E)$ for an ample subset $E$ of $A \times B$. Since $\Gamma \times_{X(A)} \Gamma$ is transitive, Proposition~\ref{prop:Xprod} again implies that the projection $E \to A$ is an isomorphism. Inverting this projection and composing with the projection $E \to B$, we obtain a surjection $f \colon A \to B$. One easily sees that the graph of $X(f)$ is $\Gamma$, and so $\phi=X(f)$.
\end{proof}

\begin{remark}
The proposition shows that the full subcategory of $\bS(G,\sE)$ spanned by the transitive objects is equivalent to the category $\FS$ of finite sets and surjections. The category $\bS(G,\sE)$ can actually be directly constructed from $\FS$. This is discussed in \cite[\S 5]{bcat}; according to the terminology there, $\bS(G,\sE)$ is a B-category and $\FS^{\op}$ is the corresponding A-category.
\end{remark}

\subsection{Classification of $G$-sets}

We have seen that every transitive $\sE$-smooth $G$-set is isomorphic to some $X(n)$. However, there are transitive $G$-sets that are not $\sE$-smooth. For example, if $\Gamma$ is a subgroup of the symmetric group $\fS_n$, then the quotient $X(n)/\Gamma$ is a transitive $G$-set, and one can show that it fails to be $\sE$-smooth when $\Gamma$ is non-trivial. In fact, these examples are exhaustive:

\begin{proposition} \label{prop:Gset}
Every transitive $G$-set is isomorphic to some $X(n)/\Gamma$, with $\Gamma \subset \fS_n$.
\end{proposition}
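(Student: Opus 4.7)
Fix a basepoint $x \in X$ and set $U = \mathrm{Stab}_G(x)$, an open subgroup (by smoothness of $X$). Since $\sE$ is a neighborhood basis of the identity, $U$ contains some $V_0 = G\{\fA_1\} \cap \cdots \cap G\{\fA_n\} \in \sE$, where $(\fA_1, \ldots, \fA_n)$ is a partition of $\fX$ into non-empty clopen sets. By Proposition~\ref{prop:E-smooth}(a), $G/V_0 \cong X(n)$, which yields a $G$-equivariant surjection $\pi \colon X(n) \twoheadrightarrow X$. The plan is to exhibit a subgroup $\Gamma \subseteq \fS_n$ (acting on $X(n)$ by permutation of coordinates) through which $\pi$ factors as an isomorphism $X(n)/\Gamma \xrightarrow{\sim} X$.

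Let $N \subseteq G$ be the setwise stabilizer of the partition $\{\fA_1, \ldots, \fA_n\}$, and let $\phi \colon N \to \fS_n$ record the permutation each $g \in N$ induces on the $\fA_i$. Then $\ker \phi = V_0$, and $\phi$ is surjective by the same gluing argument that establishes transitivity of the $G$-action on $X(A)$: for any $\sigma \in \fS_n$ choose homeomorphisms $\fA_i \to \fA_{\sigma(i)}$ (possible as each $\fA_i$ is itself a Cantor space) and glue. If $U \subseteq N$, set $\Gamma = \phi(U) \subseteq \fS_n$; since $V_0 \subseteq U$ and $V_0 = \ker \phi$, one has $U = \phi^{-1}(\Gamma)$, and a direct check identifies this with the stabilizer in $G$ of the image of $p_0 = (\fA_1, \ldots, \fA_n)$ in $X(n)/\Gamma$. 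Hence $X \cong X(n)/\Gamma$ in this case.

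The remaining case $U \not\subseteq N$ is the main obstacle. I would dispose of it via the following key claim: for any $u \in G \setminus N$, one has $\langle V_0, u \rangle = G$. Granting this, any $u \in U \setminus N$ forces $U \supseteq \langle V_0, u \rangle = G$, so $U = G$ and $X$ is the singleton $X(1)/\fS_1$. The idea for the key claim is combinatorial: by Proposition~\ref{prop:Xprod}, the double coset space $V_0 \backslash G / V_0$ is finite and in bijection with the ample subsets of $[n] \times [n]$, and the assumption $u \notin N$ says that $V_0 u V_0$ corresponds to an ample subset that is not the graph of a permutation. Using that $V_0 \cong \mathrm{Homeo}(\fA_1) \times \cdots \times \mathrm{Homeo}(\fA_n)$ acts transitively on non-empty proper clopen subsets of each $\fA_j$, one shows by forming alternating products $v_1 u v_2 u \cdots v_k$ (with $v_i \in V_0$) that every ample subset of $[n]\times[n]$ arises as the pattern of some such product; equivalently, $\langle V_0, u\rangle$ meets every double coset of $V_0$ in $G$, and since $V_0 \subseteq \langle V_0, u\rangle$ this yields $\langle V_0, u \rangle = G$. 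This pattern-generation step is where the argument is delicate, and relies essentially on the high flexibility of $\mathrm{Homeo}(\fX)$ on clopens of a fixed combinatorial type.
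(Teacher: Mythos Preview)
Your treatment of the case $U \subseteq N$ is correct and matches the paper's analysis of ``small'' equivalence relations (Lemma~\ref{lem:Gset-2}). However, the key claim in the case $U \not\subseteq N$ is false. Here is a counterexample with $n=4$: partition $\fX = \fA_1 \sqcup \fA_2 \sqcup \fA_3 \sqcup \fA_4$ and set $V_0$ accordingly. Split $\fA_1 = \fD_1 \sqcup \fD_2$ into proper clopens, and let $u$ swap $\fD_2$ with $\fA_2$ (via any homeomorphism) while fixing $\fD_1$, $\fA_3$, $\fA_4$ pointwise. Then $u(\fA_1) = \fD_1 \cup \fA_2$ is not one of the $\fA_j$, so $u \notin N$. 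But both $V_0$ and $u$ preserve the coarser partition $\fC_1 = \fA_1 \cup \fA_2$, $\fC_2 = \fA_3 \cup \fA_4$, hence $\langle V_0, u \rangle \subseteq G\{\fC_1\} \cap G\{\fC_2\} \subsetneq G$. Equivalently, taking $U = G\{\fC_1\} \cap G\{\fC_2\}$ gives $V_0 \subseteq U$, $U \not\subseteq N$, yet $G/U \cong X(2)$ is not a point. Your dichotomy ``either $U \subseteq N$ or $U = G$'' therefore fails, and no pattern-generation argument on ample subsets can rescue it, since the obstruction is the existence of a genuine intermediate open subgroup.

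What actually happens in the non-small case is not that $U$ blows up to $G$, but that the quotient $X(n) \to X$ factors through some $X(m)$ with $m < n$. The paper handles this by induction on $n$: it shows (Lemma~\ref{lem:Gset-3}) that if the equivalence relation $R$ on $X(n)$ is not small, then $R$ contains the kernel pair of a map $X(n) \to X(m)$ induced by a proper surjection $[n] \to [m]$, so $X$ is already a quotient of $X(m)$ and one recurses. In the counterexample above this is exactly the map $X(4) \to X(2)$ coming from the coarsening $\{\fA_i\} \mapsto \{\fC_j\}$. Your argument is salvageable if you replace the false claim by this inductive step.
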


One can also show that any map $X(n)/\Gamma \to X(m)/\Gamma'$ lifts to a map $X(n) \to X(m)$. Using this, one can show that $X(n)/\Gamma$ and $X(m)/\Gamma'$ are isomorphic if and only if $n=m$ and $\Gamma$ is conjugate to $\Gamma'$ inside of $\fS_n$. This completely classifies transitive $G$-sets. We will not need these additional results though, so we do not prove them.

Proposition~\ref{prop:Gset} can also be viewed as a classification of open subgroups of $G$. In particular, we find:

\begin{corollary} \label{cor:Gset}
Every open subgroup of $G$ contains a member of $\sE$ with finite index.
\end{corollary}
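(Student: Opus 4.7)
The plan is to reduce directly to Proposition~\ref{prop:Gset}. Let $U$ be an open subgroup of $G$; we want to produce $V \in \sE$ with $V \subset U$ and $[U:V] < \infty$. Since $G/U$ is a transitive $G$-set, Proposition~\ref{prop:Gset} provides a $G$-equivariant isomorphism $G/U \cong X(n)/\Gamma$ for some $n \ge 1$ and some $\Gamma \subset \fS_n$.

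We then produce $V$ by lifting the identity coset one level up the quotient $\pi \colon X(n) \to X(n)/\Gamma$. Concretely, pick $x_0 = (\fA_1, \ldots, \fA_n) \in X(n)$ whose $\Gamma$-orbit corresponds to the coset $U$ under this identification, and let $V$ be the stabilizer of $x_0$ in $G$. Then
\[
V = G\{\fA_1\} \cap \cdots \cap G\{\fA_n\}
\]
lies in $\sE$ by the very definition of $\sE$; and since $V$ fixes $x_0$, it fixes its image $[x_0]$ in $X(n)/\Gamma \cong G/U$, namely the coset $U$, so $V \subset U$.

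It remains to bound $[U:V]$. The subgroup $U$ preserves the fiber $\pi^{-1}([x_0])$, which is precisely the $\Gamma$-orbit of $x_0$. The action of $\Gamma$ on $X(n)$ is free: if $\sigma \in \fS_n$ satisfies $\fA_{\sigma(i)} = \fA_i$ for all $i$, then since the $\fA_i$ are non-empty and pairwise disjoint they are all distinct, forcing $\sigma = \id$. Hence $\lvert U \cdot x_0 \rvert \le \lvert \Gamma \cdot x_0 \rvert = \lvert \Gamma \rvert \le n!$, and orbit-stabilizer gives $[U:V] = \lvert U \cdot x_0 \rvert \le n!$. The only genuine input is Proposition~\ref{prop:Gset}; once that is granted, the corollary is an essentially formal consequence, so no serious obstacle is anticipated beyond invoking it.
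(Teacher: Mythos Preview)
Your proof is correct and follows exactly the approach the paper intends: the paper states that Proposition~\ref{prop:Gset} ``can also be viewed as a classification of open subgroups of $G$'' and derives the corollary from it without spelling out the details, and your argument is precisely the natural way to fill those in. The only input is indeed Proposition~\ref{prop:Gset}, and the stabilizer/orbit-stabilizer computation you give is the expected one.
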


We note that Corollary~\ref{cor:Gset} follows from a result of Truss \cite[Corollary~3.8]{Truss}; in fact, Truss proves the stronger result that $G$ satisfies the strong small index property. Our arguments are more algebraic, and serve as a model for more general results in \cite[\S 4.3]{regcat}.

The proof will take a bit of work. A transitive $G$-set $X$ is isomorphic to $G/U$ for some open subgroup $U$. By definition, $U$ contains some subgroup $V \in \sE$. The transitive $G$-set $G/V$ is $\sE$-smooth, and thus isomorphic to $X(A)$ for some finite set $A$. We therefore have a surjection $X(A) \to X$, and so $X$ is isomorphic to $X(A)/R$, where $R=X(A) \times_X X(A)$ is the equivalence relation on $X(A)$ induced by the quotient map. To prove the proposition, we will class the equivalence relations on $X(A)$.

Let $R \subset X(A) \times X(A)$ be a $G$-stable subset. By Proposition~\ref{prop:Xprod}, there is a family $\cR=\cR(R)$ of ample subsets of $A \times A$ such that $R = \coprod_{E \in \cR} X(E)$. The following lemma characterizes equivalence relations from this point of view.

\begin{lemma} \label{lem:Gset-1}
The set $R$ is an equivalence relation if and only if $\cR$ satisfies the following conditions:
\begin{enumerate}
\item The diagonal subset of $A \times A$ belongs to $\cR$.
\item If $E \in \cR$ then $E^t \in \cR$, where $E^t=\{(y,x) \mid (x,y) \in E\}$ is the transpose of $E$.
\item Let $F \subset A \times A \times A$. If $p_{12}(F)$ and $p_{23}(F)$ belong to $\cR$ then so does $p_{13}(F)$. Here $p_{ij}$ is the projection onto the $i$ and $j$ factors.
\end{enumerate}
\end{lemma}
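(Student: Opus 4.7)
The plan is to translate each of the three defining properties of an equivalence relation---reflexivity, symmetry, transitivity---into a condition on the combinatorial data $\cR$, using Proposition~\ref{prop:Xprod} as the dictionary between $G$-orbits on $X(A)^n$ and ample subsets of $A^n$. Throughout, write $p_{ij}$ for the projections $A^3 \to A^2$.

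For reflexivity, I would identify the diagonal $\Delta_{X(A)} \subset X(A)^2$. Since the image of $\phi \times \phi \colon \fX \to A \times A$ is exactly $\Delta_A$, the orbit decomposition of Proposition~\ref{prop:Xprod} gives $\Delta_{X(A)} = X(\Delta_A)$. Hence $R \supset \Delta_{X(A)}$ iff $\Delta_A \in \cR$, which is (a). For symmetry, the swap involution of $X(A)^2$ sends the orbit $X(E)$ to $X(E^t)$, because the image of $(\phi,\psi)$ is $E$ iff the image of $(\psi,\phi)$ is $E^t$. So $R = R^t$ iff $\cR$ is closed under transpose, which is (b).

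The main work is in the transitivity step. I would express $R \circ R$ as the image under $p_{13} \colon X(A)^3 \to X(A)^2$ of the fiber product $R \times_{X(A)} R$, where the structure maps are the second projection on the first copy of $R$ and the first projection on the second. Using Proposition~\ref{prop:Xprod} (applied to the inclusions $X(E_i) \hookrightarrow X(A \times A) \to X(A)$), I would decompose
\[
R \times_{X(A)} R \;=\; \coprod_{E_1,E_2 \in \cR} X(E_1) \times_{X(A)} X(E_2) \;=\; \coprod_{F} X(F),
\]
where $F$ ranges over ample subsets of $E_1 \times_A E_2 \subset A^3$ for $E_1, E_2 \in \cR$. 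Equivalently, $F$ ranges over arbitrary subsets of $A^3$ with $p_{12}(F), p_{23}(F) \in \cR$: indeed, taking $E_1 = p_{12}(F)$ and $E_2 = p_{23}(F)$ matches the two descriptions exactly. A short chase of definitions---$(\phi_1,\phi_3)$ lies in $p_{13}(X(F))$ iff some $\phi_2$ makes $(\phi_1,\phi_2,\phi_3)$ lie in $X(F)$, iff the image of $\phi_1 \times \phi_3$ is $p_{13}(F)$---gives $p_{13}(X(F)) = X(p_{13}(F))$. Note that $p_{13}(F)$ is automatically ample: $p_1$ is surjective because $p_{12}(F)$ is ample, and $p_3$ is surjective because $p_{23}(F)$ is. Therefore $R \circ R = \bigcup_F X(p_{13}(F))$, and $R \circ R \subset R$ is equivalent to $p_{13}(F) \in \cR$ for all such $F$, which is condition (c).

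The main obstacle is the bookkeeping in the transitivity step: matching the ``ample in $E_1 \times_A E_2$'' language supplied by Proposition~\ref{prop:Xprod} with the cleaner ``arbitrary $F \subset A^3$ with $p_{12}(F), p_{23}(F) \in \cR$'' formulation used in (c), and verifying that the set-theoretic image under $p_{13}$ really is the orbit $X(p_{13}(F))$ rather than a proper subset. Once these identifications are in place, each axiom corresponds to one of (a), (b), (c), and the lemma follows.
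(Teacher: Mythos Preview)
Your argument is correct and follows the same route as the paper: each equivalence-relation axiom is translated into a condition on $\cR$ via the orbit parametrization of Proposition~\ref{prop:Xprod}, the crucial identity being $p_{ij}(X(F))=X(p_{ij}(F))$. The one loose end you flag---that $p_{13}(X(F))$ might be a proper subset of $X(p_{13}(F))$---is resolved by noting it is a non-empty $G$-stable subset of a transitive $G$-set; the paper's proof is terser, takes this identification for granted, and only writes out one direction explicitly.
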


\begin{proof}
Suppose $R$ is an equivalence relation. The reflexive law for $R$ implies (a), while the symmetry law implies (b). Let $F$ as in (c) be given. Since $F$ surjects onto each of the three factors, $X(F)$ is a $G$-orbit on $X(A) \times X(A) \times X(A)$. The hypotheses on $F$ ensure that $p_{12}(X(F))$ and $p_{23}(X(F))$ are contained in $R$. The transitive law for $R$ thus ensures that $p_{13}(X(F))$ is contained in $R$, which means that $p_{13}(F)$ belongs to $\cR$. The reverse direction follows from similar considerations.
\end{proof}

Suppose $\Gamma$ is a subgroup of the symmetric group $\fS_A$. Then $\Gamma$ acts on $X(A)$, and thus induces an equivalence relation $R_{\Gamma}$ on $X(A)$; that is, $R_{\Gamma}(x,y)$ is true if $x$ and $y$ are in the same orbit. Let $\cR_{\Gamma}$ correspond to $R_{\Gamma}$. One easily sees that $\cR_{\Gamma} = \{E_{\sigma}\}_{\sigma \in \Gamma}$, where $E_{\sigma} \subset A \times A$ is the graph of the map $\sigma \colon A \to A$.

Fix a $G$-stable equivalence relation $R$ on $X(A)$ for the next two lemmas, and let $\cR=\cR(R)$ be the corresponding collection of subsets of $A \times A$. We say that $R$ is \defn{small} if for each $E \in \cR$, the projection map $p_1 \colon A \times A \to A$ restricts to a bijection $p_1 \colon E \to A$; note by the transpose condition (b) above, this implies that $p_2$ also induces a bijection $p_2 \colon E \to A$. The equivalence relations $R_{\Gamma}$ from the previous paragraph are clearly small. In fact, this is all of them:

\begin{lemma} \label{lem:Gset-2}
If $R$ is a small then $R=R_{\Gamma}$ for some subgroup $\Gamma \subset \fS_A$.
\end{lemma}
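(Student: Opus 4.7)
The plan is to extract a subgroup $\Gamma \subset \fS_A$ directly from the family $\cR = \cR(R)$ and then verify $\cR = \cR_\Gamma$. First I would unpack the smallness hypothesis: by assumption each $E \in \cR$ has $p_1 \colon E \to A$ bijective, and the transpose condition (Lemma~\ref{lem:Gset-1}(b)) forces $p_2 \colon E \to A$ to be bijective as well. Hence $E$ is the graph of a unique permutation $\sigma_E \in \fS_A$, i.e.\ $E = E_{\sigma_E}$. I would then simply define
\[
\Gamma = \{\sigma_E : E \in \cR\} \subset \fS_A,
\]
so that $\cR = \{E_\sigma : \sigma \in \Gamma\} = \cR_\Gamma$ tautologically. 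Consequently $R = \coprod_{\sigma \in \Gamma} X(E_\sigma) = R_\Gamma$ the moment $\Gamma$ is shown to be a subgroup.

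The subgroup verification is a direct translation of the three conditions in Lemma~\ref{lem:Gset-1}. Condition (a) gives $\mathrm{id} \in \Gamma$, since the diagonal of $A \times A$ is $E_{\mathrm{id}}$. Condition (b) gives closure under inversion, because the transpose of the graph of $\sigma$ is the graph of $\sigma^{-1}$. For closure under composition, given $\sigma, \tau \in \Gamma$, I would apply condition (c) to the three-element subset
\[
F = \{(a, \tau(a), \sigma\tau(a)) : a \in A\} \subset A \times A \times A.
\]
Here $p_{12}(F) = E_\tau$ and, reparametrizing by the bijection $\tau$, $p_{23}(F) = E_\sigma$; both lie in $\cR$, so condition (c) places $p_{13}(F) = E_{\sigma\tau}$ in $\cR$, giving $\sigma\tau \in \Gamma$.

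The only step that requires any thought is the composition closure, and even there the whole content is picking the correct $F \subset A^3$. Once that choice is made, the argument is essentially bookkeeping: smallness converts the combinatorial family $\cR$ into a set of permutations, and the three axioms for a $G$-stable equivalence relation become, term for term, the three axioms for a subgroup of $\fS_A$.
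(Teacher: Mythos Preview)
Your proof is correct and follows essentially the same approach as the paper's: extract a permutation from each $E\in\cR$ using smallness, let $\Gamma$ be the resulting set, and translate conditions (a)--(c) of Lemma~\ref{lem:Gset-1} into the subgroup axioms via an explicit triple-graph $F\subset A^3$. One minor slip: your $F$ is not a ``three-element subset'' but a subset of $A\times A\times A$ of cardinality $\#A$; the argument is unaffected.
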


\begin{proof}
Let $E \in \cR$. Since the two projection maps $E \to A$ are bijections, it follows that $E=E_{\sigma}$ is the graph of some permutation $\sigma$ of $A$. Let $\Gamma$ be the set of $\sigma$'s arising in this way; this is a subset of $\fS_A$. The conditions in Lemma~\ref{lem:Gset-1} imply that $\Gamma$ is a subgroup: (a) implies that $1 \in \Gamma$, (b) that $\Gamma$ is closed under inversion, and (c) that $\Gamma$ is closed under multiplication. (We elaborate on the final point. Say $\sigma, \tau \in \Gamma$. Let $F \subset A \times A \times A$ be the fiber product of $E_{\sigma}$ and $E_{\tau}$. Then $p_{12}(F)=E_{\sigma}$ and $p_{23}(F)=E_{\tau}$ belong to $\cR$, and so $p_{13}(F)$ does as well. One easily sees that $p_{13}(F)=E_{\sigma \tau}$, and so $\sigma \tau \in \Gamma$.) Thus $R=R_{\Gamma}$, as required.
\end{proof}

The following is the key lemma:

\begin{lemma} \label{lem:Gset-3}
Suppose $R$ is not small. Then there is a surjection $A \to B$, that is not an isomorphism, such that $\cR$ contains all ample subsets of $A \times_B A$.
\end{lemma}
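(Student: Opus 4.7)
The plan is to construct a proper surjection $A \to B$ from the reflexive elements of $\cR$. Put $\cR^{\mathrm{refl}} = \{P \in \cR : \Delta \subset P\}$; this contains $\Delta$, and for every $E \in \cR$ it contains the relation $E^t \cdot E := \{(x,y) : \exists b,\, (b,x),(b,y) \in E\}$, which lies in $\cR$ by part~(c) of Lemma~\ref{lem:Gset-1} applied to $F = \{(x,b,y) : (b,x),(b,y) \in E\}$ (whose pairwise projections are $E^t$ and $E$), and which is reflexive because ampleness of $E$ supplies, for each $a$, some $b$ with $(b,a) \in E$. Symmetrically $E \cdot E^t \in \cR^{\mathrm{refl}}$. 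Define $\sim$ on $A$ to be the equivalence relation generated by $\bigcup_{P \in \cR^{\mathrm{refl}}} P$ (which is automatically symmetric since $P^t \in \cR^{\mathrm{refl}}$ whenever $P$ is), and set $B = A/{\sim}$. Non-smallness of $R$ furnishes $E \in \cR$ and distinct $b_1, b_2$ in a common fiber $E(a) = \{b : (a,b) \in E\}$; then $(b_1, b_2) \in E^t \cdot E$, so $\sim$ is strictly coarser than equality and $A \to B$ is not an isomorphism.

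The key step is to show that $A \times_B A$ itself is an element of $\cR$. The lever is the following absorption property: for $P_1, P_2 \in \cR^{\mathrm{refl}}$ the composite $P_1 \cdot P_2$ belongs to $\cR^{\mathrm{refl}}$ and \emph{contains} $P_1 \cup P_2$ (for $(a,c) \in P_1$ use the intermediate index $b = c$ along with $(c,c) \in P_2$, and symmetrically take $b = a$ for $(a,c) \in P_2$). Enumerate $\cR^{\mathrm{refl}} = \{P_1, \ldots, P_n\}$ and set $N = P_1 \cdot P_2 \cdots P_n \in \cR$; then by the same padding argument $N$ is reflexive and $N \supset P_i$ for every $i$. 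The iterated powers $N^k \in \cR$ are therefore increasing and, since $\wp(A \times A)$ is finite, they stabilize at the transitive closure $\overline{N} \in \cR$. Now $\overline{N} \subset A \times_B A$ is immediate, since each $P_i$ is. Conversely, any pair $a \sim a'$ admits a chain $a = a_0, a_1, \ldots, a_K = a'$ with $(a_{i-1}, a_i) \in P_{j_i}$ for some $j_i$, and this chain is realized inside $N^K$ by, during the $i$-th pass through $N$, firing the genuine step at the $j_i$-th slot and padding all other slots with the reflexive ``do-nothing'' transition $(a_{i-1}, a_{i-1})$ or $(a_i, a_i)$. Hence $\overline{N} = A \times_B A$ and it lies in $\cR$.

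Finally, for an arbitrary ample $D \subset A \times_B A$, put $F = \{(a,b,c) \in A^3 : a \sim b \sim c,\ (a,c) \in D\}$. Ampleness of $D$ on the first projection gives, for every $a$, some $c$ with $(a,c) \in D$, and automatically $c \sim a$; this forces $p_{12}(F) = A \times_B A$, and symmetrically $p_{23}(F) = A \times_B A$. The equality $p_{13}(F) = D$ is clear ($\supset$ via $b = a$). Part~(c) of Lemma~\ref{lem:Gset-1} then yields $D \in \cR$, which completes the proof. The main obstacle is the middle paragraph: condition~(c) closes $\cR$ under composition but not under union, so one cannot directly assert $\bigcup_{E \in \cR} E \in \cR$. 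Restricting to $\cR^{\mathrm{refl}}$ is exactly what is needed to convert composition into an operation that absorbs unions, after which the transitive-closure argument delivers the full equivalence relation.
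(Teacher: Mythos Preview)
Your proof is correct and takes a genuinely different route from the paper's. The paper picks a single $E \in \cR$ with non-injective $p_1\vert_E$, finds two specific elements $b \ne c$ of $A$ lying in a common fiber, and sets $B = A/(b=c)$; since $A \times_B A$ then differs from the diagonal only inside $\{b,c\} \times \{b,c\}$, there are exactly seven ample subsets to produce, and the paper manufactures them with two explicit choices of $F$ in condition~(c) of Lemma~\ref{lem:Gset-1}. You instead let $B$ be the quotient by the equivalence relation generated by \emph{all} reflexive members of $\cR$, show $A \times_B A \in \cR$ via the absorption/stabilization argument, and then recover every ample $D \subset A \times_B A$ by a single uniform application of~(c). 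The paper's approach is shorter and more hands-on (the minimal identification keeps the case analysis tiny), while yours is more structural: the observation that composition of reflexive members of $\cR$ absorbs their union is a clean lever, and your final paragraph isolates a useful general principle---once an equivalence relation on $A$ lies in $\cR$, so does every ample subrelation---which the paper's ``general case'' step also proves, but only in the special situation where just two points have been identified.
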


\begin{proof}
Let $E$ be a member of $\cR$ such that $p_1 \colon E \to A$ is not a bijection. Let $(a,b)$ and $(a,c)$ be elements of $E$ with $b \ne c$, and let $B$ be the quotient of $A$ where $b$ and $c$ are identified. Let $D_1, \ldots, D_7$ be the seven ample subsets of $\{b,c\} \times \{b,c\}$ (see Figure~\ref{fig:ample}), enumerated in any order, and let $D_0 \subset A \times A$ be the set of elements $(x,x)$ with $x \ne b,c$. Then $D_0 \cup D_i$, for $1 \le i \le 7$, are the ample subsets of $A \times_B A$. We claim that $\cR$ contains these seven sets.

We first handle $D_7$, which is the 4-element set $\{b,c\} \times \{b,c\}$. Consider the set
\begin{displaymath}
F = \{(x,y,x) \mid (x,y) \in E^t\} \cup \{(x,a,y) \mid x,y \in \{b,c\}\}.
\end{displaymath}
We have $p_{12}(F)=E^t$ and $p_{23}(F)=E$, and so $p_{13}(F)=D_0 \cup D_7$ belongs to $\cR$, as required.

We now handle the general case. Put
\begin{displaymath}
D_i^+ = \{(x,y,z) \mid \text{$(x,z) \in D_i$ and $y \in \{b,c\}$} \}.
\end{displaymath}
Note that $p_{12}(D_i^+)=p_{23}(D_i^+)=D_7$, while $p_{13}(D_i^+)=D_i$. Put
\begin{displaymath}
F' = \{(x,x,x) \mid \text{$x \in A$ and $x \ne b,c$} \} \cup D_i^+
\end{displaymath}
Then
\begin{displaymath}
p_{12}(F')=p_{23}(F')=D_0 \cup D_7, \qquad p_{13}(F')=D_0 \cup D_i.
\end{displaymath}
Since $D_0 \cup D_7$ belongs to $\cR$ by the previous paragraph, we find $D_0 \cup D_i$ belongs to $\cR$, as required.
\end{proof}

\begin{proof}[Proof of Proposition~\ref{prop:Gset}]
For a finite set $A$, consider the following statement:
\begin{itemize}
\item[$S(A)$:] Any $G$-equivariant quotient of $X(A)$ is isomorphic to one of the form $X(B)/\Gamma$, for some finite set $B$ and subgroup $\Gamma$ of $\fS_B$.
\end{itemize}
Since every transitive $G$-set is a quotient of some $X(A)$, it is enough to prove $S(A)$ for all $A$. We proceed by induction on the cardinality of $A$.

Let a finite set $A$ be given, and suppose $S(B)$ holds for $\# B < \# A$. Consider a quotient $X(A)/R$, where $R$ is a $G$-invariant equivalence relation on $A$. If $R$ is small then $X(A)/R=X(A)/\Gamma$ for some subgroup $\Gamma$ of $\fS_A$ by Lemma~\ref{lem:Gset-2}. Suppose $R$ is not small. Then, by Lemma~\ref{lem:Gset-3}, there is a proper surjection $A \to B$ such that $\cR=\cR(R)$ contains all ample subsets of $A \times_B A$. Let $R'=X(A) \times_{X(B)} X(A)$, which is also an equivalence relation on $X(A)$. From Proposition~\ref{prop:Xprod}, we see that $\cR'=\cR(R')$ is the collection of all ample subsets of $A \times_B A$. Thus $\cR' \subset \cR$, and so $R' \subset R$, and so $X(A)/R$ is a quotient of $X(A)/R' \cong X(B)$. Thus by $S(B)$, we see that $X(A)/R$ is of the form $X(C)/\Gamma$ for some $\Gamma \subset \fS_C$. We have thus proved $S(A)$, which completes the proof.
\end{proof}

\section{Measures for $G$} \label{s:meas}

\subsection{Elementary maps}

A surjection $f \colon A \to B$ of non-empty finite sets is \defn{elementary} if $\# A=1+\# B$. In this case, there is a unique point $x \in B$ such that $f^{-1}(x)$ has cardinality two; if $y \in B$ is not equal to $x$ then $f^{-1}(y)$ has cardinality one. We call $x$ the \defn{distinguished point} of $f$. Every surjection of finite sets can be expressed as a composition of elementary surjections.

Similarly, we say that a map $X(A) \to X(B)$ of $G$-sets is \defn{elementary} if $\# A=1+\# B$. Such a map has the form $X(f)$ where $f \colon A \to B$ is an elementary surjection. Every map of transitive $\sE$-smooth $G$-sets can be expressed as a composition of elementary maps. (Here we have appealed to Proposition~\ref{prop:E-smooth}.)

The following proposition, which describes the fiber product of elementary maps, will play an important role in our analysis of measures.

\begin{proposition} \label{prop:elem-fiber}
Consider a cartesian square
\begin{displaymath}
\xymatrix{
Z \ar[r] \ar[d] & X(A) \ar[d]^{\phi} \\
X(B) \ar[r]^-{\psi} & X(C) }
\end{displaymath}
where $\phi$ and $\psi$ are elementary maps. Write $\phi=X(f)$ and $\psi=X(g)$, where $f \colon A \to C$ and $g \colon B \to C$ are elementary surjections, and let $n$ be the common value of $\# A$, $\# B$, and $1+\# C$.
\begin{enumerate}
\item If $f$ and $g$ have different distinguished point then $Z \cong X(n+1)$.
\item If $f$ and $g$ have the same distinguished point then $Z \cong X(n+2) \amalg X(n+1)^{\amalg 4} \amalg X(n)^{\amalg 2}$.
\end{enumerate}
\end{proposition}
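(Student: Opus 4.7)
The plan is to apply Proposition~\ref{prop:Xprod} directly: we have $Z = \coprod_D X(D)$, where $D$ ranges over ample subsets of $A \times_C B$. So the task reduces to describing $A \times_C B$ in each case and enumerating its ample subsets.

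First I would set up common notation. Since $f$ and $g$ are elementary, $\#C = n-1$. Let $x \in C$ be the distinguished point of $f$, so $f^{-1}(x) = \{a_1, a_2\}$ and $f^{-1}(c)$ is a singleton for $c \ne x$; similarly let $y \in C$ be the distinguished point of $g$. The fiber product $A \times_C B$ decomposes over $C$: above each $c \in C$ which is distinguished for neither $f$ nor $g$, the fiber is a single pair, which I will call an \emph{unavoidable} element, since any ample subset must contain it (its image under $p_1$ is the unique preimage of $c$ under $f$, which has no other source in $A \times_C B$).

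In case (a), $x \ne y$. The fiber above $x$ consists of two pairs $(a_1,b),(a_2,b)$ where $\{b\} = g^{-1}(x)$; these two pairs are the only preimages of $a_1$ and $a_2$ under $p_1$, so both are unavoidable. The fiber above $y$ is treated symmetrically. The remaining $n-3$ points of $C$ each contribute one unavoidable element. Hence $A \times_C B$ has $n+1$ elements, all of which are unavoidable, so the only ample subset is $A \times_C B$ itself, giving $Z \cong X(n+1)$.

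In case (b), $x = y$, and the fiber above $x$ is the $2 \times 2$ grid $\{a_1,a_2\} \times \{b_1,b_2\}$, while above the remaining $n-2$ points of $C$ each fiber contributes one unavoidable element. Ample subsets of $A \times_C B$ are therefore in bijection with ample subsets of the $2 \times 2$ grid, of which there are exactly seven by Figure~\ref{fig:ample}: one of size $4$, four of size $3$, and two of size $2$ (the diagonal and antidiagonal). Adjoining the $n-2$ unavoidable elements yields ample subsets of cardinalities $n+2, n+1, n+1, n+1, n+1, n, n$, which gives the claimed decomposition $Z \cong X(n+2) \amalg X(n+1)^{\amalg 4} \amalg X(n)^{\amalg 2}$.

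There is no real obstacle; the only step requiring care is checking that the ``unavoidable'' elements are forced (which is a quick argument about fibers of $p_1$ and $p_2$), and then verifying by inspection that the seven ample subsets of the $2 \times 2$ grid have the stated sizes.
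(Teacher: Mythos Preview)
Your proposal is correct and follows essentially the same approach as the paper: both apply Proposition~\ref{prop:Xprod} and reduce to enumerating ample subsets of $A \times_C B$, identifying the fiber over the distinguished point(s) and observing that the remaining singleton fibers are forced. Your ``unavoidable'' terminology and the explicit justification via $p_1,p_2$ make the argument slightly more detailed than the paper's, but the structure is identical.
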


\begin{proof}
(a) The only ample subset of $A \times_C B$ is itself, and this set has cardinality $n+1$. The result thus follows by Proposition~\ref{prop:Xprod}.

(b) Let $x \in C$ be the common distinguished point of $f$ and $g$, and let $C_0=C \setminus x$. Then $A \times_C B$ is naturally identified with $C_0 \amalg (f^{-1}(x) \times g^{-1}(x))$. The ample subsets have the form $C_0 \amalg D$, where $D$ is an ample subset of $f^{-1}(x) \times g^{-1}(x)$. There are seven such $D$, as depicted in Figure~\ref{fig:ample}. The result again follows by Proposition~\ref{prop:Xprod}.
\end{proof}

\subsection{Construction of measures}

We now construct two important measures for $G$.

\begin{theorem} \label{thm:meas}
There exist unique regular $\bQ$-valued measures $\mu$ and $\nu$ for $G$ satisfying
\begin{displaymath}
\mu(X(n))=(-2)^{n-1}, \qquad \nu(X(n))=(-1)^{n-1}.
\end{displaymath}
\end{theorem}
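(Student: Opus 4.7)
The approach is to invoke the criterion of Subsection~\ref{ss:meas}: assigning a nonzero value in $\bQ$ to every transitive $G$-set (isomorphism-invariantly), with $\bone \mapsto 1$, and satisfying the fiber-product identity $\mu(X \times_W Y) = \mu(X)\mu(Y)\mu(W)^{-1}$ on transitive $G$-sets (left side extended additively), defines a regular measure. For uniqueness, Proposition~\ref{prop:E-smooth}(a) identifies all transitive $\sE$-smooth $G$-sets as the $X(n)$, so the prescribed values fix $\mu$ and $\nu$ on them. Every other transitive $G$-set has the form $X(n)/\Gamma$ by Proposition~\ref{prop:Gset}, and applying the base-change and multiplicativity axioms to the square $X(n) \times_{X(n)/\Gamma} X(n) \cong \coprod_{\sigma \in \Gamma} X(n)$ forces $\mu(X(n)/\Gamma) = \mu(X(n))/|\Gamma|$ (and similarly for $\nu$), pinning both measures down.

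\textbf{Reduction to a combinatorial identity.} For existence, define $\mu$ and $\nu$ by the formulas above; the isomorphism and unit axioms are immediate. The fiber-product identity reduces via Corollary~\ref{cor:Gset} and base-change to the $\sE$-smooth case (pull each map back along a finite-degree $\sE$-smooth cover). There, Proposition~\ref{prop:Xprod} writes $X(A) \times_{X(C)} X(B) = \coprod_{D} X(D)$ with $D$ ranging over ample subsets of $A \times_C B$. Since ampleness is fiberwise over $C$, the identity factors as a product over $c \in C$ and reduces to the single numerical statement
\begin{equation*}
S_t(m, n) := \sum_{D \text{ ample in } [m] \times [n]} t^{\#D} = t^{m+n-1}, \qquad m, n \ge 1,
\end{equation*}
for $t \in \{-2, -1\}$.

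\textbf{Proof of the identity and main obstacle.} Expand $S_t$ by inclusion–exclusion on the two surjectivity conditions:
\begin{equation*}
S_t(m, n) = \sum_{i=0}^{m} \sum_{j=0}^{n} (-1)^{i+j} \binom{m}{i}\binom{n}{j} (1+t)^{(m-i)(n-j)}.
\end{equation*}
For $t = -1$, $1+t = 0$ kills every term with $i < m$ and $j < n$; the surviving inclusion–exclusion sum is $0 + 0 - (-1)^{m+n} = (-1)^{m+n-1}$, since $\sum_{i}(-1)^{i}\binom{m}{i} = 0$ for $m \ge 1$. For $t = -2$, the substitution $p = m - i$, $q = n - j$ collapses the inner $q$-sum to $(1 + (-1)^{p+1})^n$, which vanishes unless $p$ is odd; then $\sum_{p \text{ odd}}\binom{m}{p} = 2^{m-1}$ yields $S_{-2}(m, n) = -(-1)^{m+n} 2^{m+n-1} = (-2)^{m+n-1}$. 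I expect this combinatorial identity to be the main obstacle; alternatively one could verify the fiber-product axiom directly on pairs of elementary maps via Proposition~\ref{prop:elem-fiber} (a one-line arithmetic check in each of cases (a) and (b)) and bootstrap by factoring, though propagating the identity through general composites requires a careful induction on the factorization.
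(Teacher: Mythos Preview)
Your argument is correct, and the combinatorics check out: the fiberwise factorisation over $C$ is valid (ample subsets of $A\times_C B$ are precisely tuples of ample subsets of the fibres $A_c\times B_c$), and your inclusion--exclusion evaluations of $S_t(m,n)$ for $t=-1,-2$ are right. However, your route differs from the paper's. The paper does \emph{not} prove the identity $S_t(m,n)=t^{m+n-1}$ directly; instead it observes that any map of transitive $\sE$-smooth $G$-sets factors into elementary maps, reduces the fibre-product identity to the case of two elementary maps, and then invokes Proposition~\ref{prop:elem-fiber} to turn that single case into the quadratic $(\alpha+1)(\alpha+2)=0$. This is exactly the ``alternative'' you sketch at the end. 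The propagation through composites that worried you is handled by the standard two-step induction (pull one map back along an elementary factor of the other, decompose the resulting fibre product into transitives, and repeat), which the paper leaves implicit.

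Each approach has its virtues. Yours is self-contained at the $\sE$-smooth level: you never touch Proposition~\ref{prop:elem-fiber} or the factorisation-and-propagate induction, and the inclusion--exclusion is elementary. The paper's approach is shorter arithmetically and, more importantly, makes it transparent \emph{why} there are exactly two values of $\alpha$---the quadratic falls out immediately---which feeds directly into the computation of $\Theta(G,\sE)$ in Theorem~\ref{thm:theta}. One caution on your extension from $(G,\sE)$ to $G$: defining $\mu(X(n)/\Gamma)=(-2)^{n-1}/\lvert\Gamma\rvert$ and then ``pulling back along a finite-degree $\sE$-smooth cover'' is correct in spirit but hides a well-definedness check (independence of the presentation $X(n)/\Gamma$) and a nontrivial compatibility argument. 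The paper sidesteps this by first building the measure on $(G,\sE)$ and then invoking the general isomorphism $\Theta(G)\otimes\bQ\cong\Theta(G,\sE)\otimes\bQ$ from \cite[\S 4.2(d)]{repst} together with \cite[Corollary~4.5]{repst}; you should either do the same or spell out the cover argument more carefully.
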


\begin{proof}
Fix a non-zero rational number $\alpha$. For a transitive $\sE$-smooth $G$-set $X$, define $\rho_{\alpha}(X)=\alpha^{n-1}$, where $n$ is such that $X \cong X(n)$; such $n$ exists by Proposition~\ref{prop:E-smooth}. We extend $\rho_{\alpha}$ additively to all finitary $\sE$-smooth $G$-sets. Clearly, $\rho_{\alpha}$ is an isomorphism invariant and satifies $\rho_{\alpha}(\bone)=1$, since $\bone=X(1)$. Thus, by the discussion in \S \ref{ss:meas}, $\rho_{\alpha}$ defines a regular measure for $(G,\sE)$ if and only if
\begin{equation} \label{eq:rho}
\rho_{\alpha}(X \times_W Y) = \rho_{\alpha}(X) \cdot \rho_{\alpha}(Y) \cdot \rho_{\alpha}(W)^{-1}
\end{equation}
holds for all maps $X \to W$ and $Y \to W$ of transitive $\sE$-smooth $G$-sets. Since any map of transitive $\sE$-smooth $G$-sets is a composition of elementary maps, it suffices to treat the case where $X \to W$ and $Y \to W$ are elementary.

Suppose we are in this case. Then, up to isomorphim, we have $X=Y=X(n+1)$ and $W=X(n)$. The fiber product $Z=X \times_W Y$ is described in Proposition~\ref{prop:elem-fiber}. In case (a), \eqref{eq:rho} amounts to
\begin{displaymath}
\alpha^{n+1} = \alpha^n \cdot \alpha^n \cdot \alpha^{1-n},
\end{displaymath}
which automatically holds. In case (b), we obtain
\begin{displaymath}
\alpha^{n+2}+4\alpha^{n+1}+2\alpha^n = \alpha^n \cdot \alpha^n \cdot \alpha^{1-n},
\end{displaymath}
which simplifies to
\begin{displaymath}
(\alpha+2)(\alpha+1)=0
\end{displaymath}
We thus see that $\rho_{\alpha}$ defines a measure for $(G,\sE)$ if and only if $\alpha=-1$ or $\alpha=-2$.

We have thus shown that the formulas for $\mu$ and $\nu$ do indeed define regular measures for $(G,\sE)$. By \cite[\S 4.2(d)]{repst}, we have $\Theta(G) \otimes \bQ \cong \Theta(G,\sE) \otimes \bQ$; note that $\sE$ is a large stabilizer class by Corollary~\ref{cor:Gset}. Thus every $\bQ$-valued measure for $(G,\sE)$ extends uniquely to $G$; in particular, $\mu$ and $\nu$ admit unique extensions to $G$. By \cite[Corollary~4.5]{repst}, we have
\begin{displaymath}
\mu(X(A)/\Gamma) = \mu(X(A))/\# \Gamma,
\end{displaymath}
and similarly for $\nu$. This shows that $\mu$ and $\nu$ are regular measures for $G$.
\end{proof}

We make two comments regarding these measures. First, if $\Gamma$ is a subgroup of $\fS_n$, then
\begin{displaymath}
\mu(X(n)/\Gamma) = [\fS_n \colon \Gamma] \cdot \frac{(-2)^n}{n!}.
\end{displaymath}
This belongs to the ring $\bZ_{(2)}$ of rational numbers with odd denominator, and so we see that $\mu$ is actually valued in $\bZ_{(2)}$. Second, we have
\begin{displaymath}
\mu(X(n) \to X(m)) = (-2)^{n-m}, \qquad \nu(X(n) \to X(m)) = (-1)^{n-m}.
\end{displaymath}
This shows that $\mu$ and $\nu$ are $\bZ$-valued measures for $(G,\sE)$.

We now compute the measures of $Y(n)$ under $\mu$ and $\nu$. These computations (and some generalizations) will be important in \S \ref{s:mu} and \S \ref{s:nu}.

\begin{proposition} \label{prop:Ymeas}
For $n \ge 0$, we have
\begin{displaymath}
\mu(Y(n)) = \begin{cases}
1 & \text{if $n$ is odd} \\
0 & \text{if $n$ is even} \end{cases}
\qquad
\nu(Y(n)) = \begin{cases}
1 & \text{if $n \ge 1$} \\
0 & \text{if $n=0$}
\end{cases}
\end{displaymath}
\end{proposition}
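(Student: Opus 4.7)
The plan is to reduce the computation to a direct binomial-sum identity using the decomposition of $Y(n)$ into the transitive pieces $X(C)$ already established in the paper.

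First I would recall from the definition of $Y(n)$ that $Y(n) = \coprod_{C} X(C)$, where $C$ ranges over non-empty subsets of $[n]$. Grouping by cardinality and using that $X(C) \cong X(k)$ when $\#C = k$ (by transitivity, or more precisely by Proposition~\ref{prop:E-smooth}), this gives the $G$-set isomorphism
\begin{displaymath}
Y(n) \;\cong\; \coprod_{k=1}^{n} X(k)^{\amalg \binom{n}{k}}.
\end{displaymath}
By additivity of the measures $\mu$ and $\nu$ on finitary $G$-sets, and by Theorem~\ref{thm:meas},
\begin{displaymath}
\mu(Y(n)) = \sum_{k=1}^{n} \binom{n}{k} (-2)^{k-1}, \qquad \nu(Y(n)) = \sum_{k=1}^{n} \binom{n}{k} (-1)^{k-1}.
\end{displaymath}

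Next I would evaluate these by the binomial theorem. For $\mu$, factoring out $-\tfrac{1}{2}$ gives
\begin{displaymath}
\mu(Y(n)) = -\tfrac{1}{2}\Bigl(\sum_{k=0}^{n} \binom{n}{k}(-2)^{k} - 1\Bigr) = -\tfrac{1}{2}\bigl((-1)^n - 1\bigr),
\end{displaymath}
which equals $0$ when $n$ is even and $1$ when $n$ is odd. For $\nu$,
\begin{displaymath}
\nu(Y(n)) = -\Bigl(\sum_{k=0}^{n}\binom{n}{k}(-1)^{k} - 1\Bigr) = -\bigl((1-1)^n - 1\bigr),
\end{displaymath}
which equals $1$ for $n \ge 1$ and $0$ for $n=0$ (since in the latter case $(1-1)^0 = 1$; alternatively one simply observes $Y(0) = \emptyset$).

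There is essentially no obstacle here: the only point needing a moment's care is the boundary case $n=0$ in the $\nu$ computation, where $Y(0)$ is the empty $G$-set (there are no continuous maps $\fX \to \emptyset$), so both sides vanish and the binomial manipulation is unnecessary. Everything else is immediate from additivity of measures plus the values of $\mu(X(n))$ and $\nu(X(n))$ provided by Theorem~\ref{thm:meas}.
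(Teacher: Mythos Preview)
Your proof is correct and follows essentially the same approach as the paper: decompose $Y(n)$ into the $\binom{n}{k}$ copies of $X(k)$, apply additivity and the values from Theorem~\ref{thm:meas}, and evaluate the resulting binomial sums. The paper phrases the decomposition as an identity in $\Theta(G)$ rather than as a $G$-set isomorphism, but the computation is identical.
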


\begin{proof}
From the definition of $Y(n)$, we have
\begin{displaymath}
[Y(n)] = \sum_{i=1}^n \binom{n}{i} [X(i)]
\end{displaymath}
in $\Theta(G)$. We thus find
\begin{displaymath}
\mu(Y(n)) = \sum_{i=1}^n \binom{n}{i} (-2)^{i-1} = -\frac{1}{2} \big( (1+(-2))^n - 1 \big),
\end{displaymath}
which agrees with the stated formula. If $n=0$ then clearly $[Y(n)]=0$ and so $\nu(Y(n))=0$ as well. If $n \ge 1$ then
\begin{displaymath}
\nu(Y(n)) = \sum_{i=1}^n \binom{n}{i} (-1)^{i-1} = - \big( (1+(-1))^n - 1 \big) = 1,
\end{displaymath}
which completes the proof.
\end{proof}

\subsection{Classification of measures}

We now determine the ring $\Theta(G)$.

\begin{theorem} \label{thm:theta}
The measures $\mu$ and $\nu$ induce a ring isomorphism
\begin{displaymath}
\Theta(G) \cong \bZ_{(2)} \times \bQ.
\end{displaymath}
\end{theorem}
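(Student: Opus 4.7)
The plan is to show that $\Phi = (\mu,\nu)\colon \Theta(G) \to \bZ_{(2)} \times \bQ$ is an isomorphism by splitting via idempotents and identifying each factor. First, I compute $\Theta(G,\sE)$. By Proposition~\ref{prop:E-smooth}, every morphism in $\bS(G,\sE)$ is a composition of elementary maps, and (choosing a bijection of targets matching distinguished points) all elementary maps $X(n+1) \to X(n)$ with a fixed $n$ are pairwise isomorphic in $\bS(G,\sE)$, sharing a common class $\tau_n \in \Theta(G,\sE)$. Case~(a) of Proposition~\ref{prop:elem-fiber}, combined with the base-change axiom, forces $\tau_n = \tau_{n-1}$ (the fiber product of two elementary maps with distinct distinguished points is itself an elementary map one level higher), so all elementary classes equal a single $\tau$; case~(b) then yields $(\tau+1)(\tau+2)=0$. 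Since every base change reduces to compositions of these elementary cases, these exhaust the relations and $\Theta(G,\sE) \cong \bZ[\tau]/(\tau+1)(\tau+2) \cong \bZ \times \bZ$ via CRT with $\tau \mapsto (-1,-2)$, matching $(\nu,\mu)$. The idempotents $e_\nu = \tau+2$ and $e_\mu = -\tau-1$ lift to $\Theta(G)$ via the natural map $\Theta(G,\sE) \to \Theta(G)$, decomposing $\Theta(G) \cong R_\mu \times R_\nu$ with $R_\mu := e_\mu \Theta(G)$ and $R_\nu := e_\nu \Theta(G)$.

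The key structural relation is $[q_{n,\Gamma}] = |\Gamma|$ in $\Theta(G)$ for the quotient map $q_{n,\Gamma}\colon X(n) \to X(n)/\Gamma$: since $\Gamma \subset \fS_n$ acts freely on $X(n)$ (any symmetry of an ordered partition into non-empty parts is trivial), the self-fiber-product is $X(n) \times_{X(n)/\Gamma} X(n) \cong \coprod_{\gamma \in \Gamma} X(n)$, with each component projecting back to $X(n)$ by the automorphism $\gamma$; the base-change axiom plus additivity then give $[q_{n,\Gamma}] = \sum_\gamma 1 = |\Gamma|$. Multiplicativity yields $|\Gamma| \cdot [X(n)/\Gamma] = \tau^{n-1}$ in $\Theta(G)$. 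In $R_\nu$ (where $\tau$ acts as $-1$), taking $\Gamma = \langle \sigma \rangle \subset \fS_m$ cyclic of order $m$ shows that every $m \ge 1$ is a unit, so $R_\nu$ is a $\bQ$-algebra; combined with $R_\nu \otimes \bQ \cong \bQ$ (which follows from $\Theta(G) \otimes \bQ \cong \Theta(G,\sE) \otimes \bQ$ via \cite[\S 4.2(d)]{repst}, applicable since $\sE$ is large by Corollary~\ref{cor:Gset}), this forces $R_\nu = \bQ$. In $R_\mu$ (where $\tau$ acts as $-2$), the same relation for odd $m$ reads $mz = 2^{m-1} e_\mu$ where $z := e_\mu [X(m)/\langle\sigma\rangle]$, and the Bezout identity $am + b \cdot 2^{m-1} = 1$ (valid since $\gcd(m,2^{m-1})=1$) gives $e_\mu = m(a e_\mu + b z)$, so every odd $m$ is a unit in $R_\mu$; hence $R_\mu$ is a $\bZ_{(2)}$-algebra surjecting onto $\bZ_{(2)}$.

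The main obstacle is to upgrade this to $R_\mu = \bZ_{(2)}$. The kernel of $R_\mu \twoheadrightarrow \bZ_{(2)}$ is $2$-power torsion, since $R_\mu \otimes_\bZ \bQ = R_\mu[1/2] = \bQ$, but the relations found so far (notably $|\Gamma|\cdot [X(n)/\Gamma] = \tau^{n-1}$ for $|\Gamma|$ even) do not by themselves exclude $2$-torsion. To eliminate it I would exploit further base-change identities from fiber products of non-$\sE$-smooth quotients: for instance, an orbit analysis of the $\fS_2 \times \fS_2$-action on the seven ample subsets of $[2]^2$ (Figure~\ref{fig:ample}) gives $X(2)/\fS_2 \times X(2)/\fS_2 \cong X(4)/\fS_2^2 \amalg X(3) \amalg X(2)/\fS_2$, and the resulting base-change identity for the projection to $X(2)/\fS_2$ expresses $[X(4)/\fS_2^2]$ in terms of $[X(2)/\fS_2]$ and $\tau$ rigidly; analogous decompositions for general $(n,\Gamma)$ should pin each $[X(n)/\Gamma]$ uniquely to its predicted value $(-2)^{n-1}/|\Gamma| \in \bZ_{(2)}$, eliminating the $2$-torsion and completing the proof.
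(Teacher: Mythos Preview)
Your computation of $\Theta(G,\sE) \cong \bZ \times \bZ$ and the idempotent decomposition $\Theta(G) \cong R_\mu \times R_\nu$ are correct and run parallel to the paper's argument, and your identification $R_\nu = \bQ$ is fine. The genuine gap is in your treatment of $R_\mu$.

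You correctly isolate the remaining obstacle as ruling out $2$-power torsion in $R_\mu$, but your proposed fix---deriving further fiber-product relations among the classes $[X(n)/\Gamma]$ to pin each one down---is not actually carried out, and it is not clear that it can be made to work. To eliminate torsion you would need to show that \emph{every} potential $2$-torsion element vanishes, not merely that finitely many specific classes take their predicted values; the single example you give (the decomposition of $X(2)/\fS_2 \times X(2)/\fS_2$) yields one relation, and there is no indication of why an inductive family of such relations would force torsion-freeness in general.

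The paper closes this gap with a single citation you have overlooked: $\Theta(G)$ is $\bZ$-torsion-free by \cite[Theorem~5.1]{repst}. This is a general structural result about $\Theta$-rings of admissible groups, not something one extracts from the particular fiber-product relations available here. With this in hand, your argument finishes immediately: $R_\mu$ is then a torsion-free $\bZ_{(2)}$-algebra with $R_\mu \otimes \bQ \cong \bQ$, so $R_\mu$ embeds in $\bQ$ as a $\bZ_{(2)}$-subalgebra; since $\mu$ is valued in $\bZ_{(2)}$, the embedding lands in $\bZ_{(2)}$, forcing $R_\mu = \bZ_{(2)}$. The paper instead uses torsion-freeness to conclude directly that $(\mu,\nu)\colon \Theta(G) \hookrightarrow \bQ \times \bQ$ and then computes the image via explicit combinations of the classes $c_n = [X(n)/\fS_n]$, but your idempotent-splitting route would work equally well once this missing ingredient is supplied.
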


\begin{proof}
Let $\pi_n \colon [n+1] \to [n]$ be the elementary surjection defined by $\pi_n(i)=i$ for $1 \le i \le n-1$ and $\pi_n(n)=\pi_n(n+1)=n$. Let $x_n$ be the class of the map $X(\pi_n) \colon X(n+1) \to X(n)$ in $\Theta(G,\sE)$. Since every surjection is a composition of elementary surjections, and every elementary surjection is isomorphic to some $\pi_n$, see that $\Theta(G,\sE)$ is generated by the classes $x_n$, for $n \ge 1$.

Let $n \ge 2$. We can find an elementary surjection $g \colon [n+1] \to [n]$ with distinguished point different from $n$. By Proposition~\ref{prop:elem-fiber}(a), we see that the base change of $X(\pi_n)$ along the map $X(g)$ is isomorphic to $X(\pi_{n+1})$. We conclude that $x_n=x_{n+1}$ in $\Theta(G,\sE)$. In particular, $\Theta(G,\sE)$ is generated by $x_1$ and $x_2$.

Now let $n \ge 1$, and consider the base change of $X(\pi_n)$ along $X(\pi_n)$. By Proposition~\ref{prop:elem-fiber}(b), we have
\begin{displaymath}
x_n=2+4x_{n+1}+x_{n+2} x_{n+1}.
\end{displaymath}
Here we have used the fact that for any map $\phi \colon X(n+2) \to X(n)$ we have $[\phi]=[\pi_{n+1}] [\pi_n]$ in $\Theta(G,\sE)$. Taking $n=1,2$ and appealing to the previous paragraph, we find
\begin{displaymath}
x_1=x_2=2+4x_2+x_2^2.
\end{displaymath}
We thus see that we have a surjective ring homomorphism
\begin{displaymath}
\bZ \times \bZ \cong \bZ[x]/((x+1)(x+2)) \to \Theta(G,\sE), \qquad x \mapsto x_1
\end{displaymath}
This map is an isomorphism: indeed, the maps $\mu,\nu \colon \Theta(G, \sE) \to \bZ$ provide an inverse.

Now, $\Theta(G)$ has no $\bZ$-torsion by \cite[Theorem~5.1]{repst} and, as we have already seen in the proof of Theorem~\ref{thm:meas}, $\Theta(G) \otimes \bQ = \Theta(G,\sE) \otimes \bQ$. We thus see that the map
\begin{displaymath}
\phi \colon \Theta(G) \to \bQ \times \bQ
\end{displaymath}
given by $(\mu,\nu)$ is an isomorphism onto its image. It thus suffices to show that the image is $\bZ_{(2)} \times \bQ$. We know that the image is contained in this subring, since $\mu$ is valued in $\bZ_{(2)}$. Put
\begin{displaymath}
c_n = \phi([X(n)/\fS_n]) = \tfrac{1}{n!} \phi([X(n)]) = \tfrac{1}{n!} \big( (-2)^n, (-1)^n \big).
\end{displaymath}
We have
\begin{displaymath}
c_n+(n+1)c_{n+1} = -\tfrac{1}{n!} \big( (-2)^n, 0 \big), \qquad
2c_n+(n+1)c_{n+1} = \tfrac{1}{n!} \big( 0, (-1)^n \big).
\end{displaymath}
Since these elements belong to $\im(\phi)$ and generate $\bZ_{(2)} \times \bQ$ as a ring, the result follows.
\end{proof}

The theorem shows that $\mu$ and $\nu$ are, in essence, the only two measures for $G$. More precisely, if $k$ is any field of characteristic~0 then there are exactly two measures for $G$ valued in $k$, namely $\mu$ and $\nu$.

\section{Representation theory at \texorpdfstring{$\mu$}{μ}} \label{s:mu}

\subsection{Preliminaries} \label{ss:mu-prelim}

Let $\cV$ be the following tensor category:
\begin{itemize}
\item The objects are symbols $V(A)$, where $A$ is a finite set.
\item A map $V(A) \to V(B)$ is a $B \times A$ matrix with entries in $\bF_2$.
\item Composition is matrix multiplication.
\item The tensor product is given by $V(A) \otimes V(B)=V(A \times B)$ on objects, and by the Kronecker product on morphisms.
\end{itemize}
Of course, $\cV$ is equivalent to the category of finite dimensional $\bF_2$-vector spaces (as a tensor category); $V(A)$ is essentially just a vector space equipped with a basis indexed by $A$.

Fix a field $k$ of characteristic~0. We let $k[\cV]$ be the $k$-linearization of $\cV$, as defined in \cite[\S 2.2]{dblexp}. We recall the basic features of this category:
\begin{itemize}
\item The objects of $k[\cV]$ are formal symbols $[V(A)]$.
\item We have $\Hom_{k[\cV]}([V(A)], [V(B)]) = k[\rM_{B,A}(\bF_2)]$. Here $\rM_{B,A}(\bF_2)$ is the set of $B \times A$ matrices, and $k[S]$ denotes the $k$-vector space with basis $S$.
\item The category $k[\cV]$ admits a tensor product $\uotimes$. On objects, it is given by the formula $[V(A)] \uotimes [V(B)]=[V(A \times B)]$.
\end{itemize}
We let $k[\cV]^{\sharp}$ be the additive-Karoubi envelope of $k[\cV]$; see \cite[\S 2.3]{dblexp} for details. Every object $X$ of $k[\cV]^{\sharp}$ admits the endomorphism $[0]$. This endomorphism is idempotent, and thus induces a decomposition
\begin{displaymath}
X = X_0 \oplus X_1
\end{displaymath}
where $X_0$ is the image of $[0]$, and $X_1$ is the image of $1-[0]$. We thus obtain a decomposition
\begin{displaymath}
k[\cV]^{\sharp} = k[\cV]^{\sharp}_0 \oplus k[\cV]^{\sharp}_1,
\end{displaymath}
The category $k[\cV]^{\sharp}_1$ is the same as the category called $k[\cV]^{\sharp}_{\chi}$ in \cite[\S 3.2]{dblexp}, where $\chi \colon \bF_2^{\times} \to k^{\times}$ is the unique (trivial) character. As explained in \cite[\S 3.2]{dblexp}, the two summands above are closed under the tensor product. By \cite[Theorem~3.12]{dblexp}, $k[\cV]^{\sharp}_1$ is a semi-simple pre-Tannakian category.

\begin{remark}
In fact, the category $k[\cV]^{\sharp}$ is semi-simple. The object $[V(\emptyset)]$ is simple, and $k[\cV]_0^{\sharp}$ consists of objects that are $[V(\emptyset)]$-isotypic. Thus $k[\cV]_1^{\sharp}$ consists of objects that do not contain $[V(\emptyset)]$ as a simple constituent.
\end{remark}

\subsection{Main results}

The following is the main result of \S \ref{s:mu}:

\begin{theorem} \label{thm:mu}
We have an equivalence of tensor categories $\uPerm_k(G; \mu)^{\rm kar} \cong k[\cV]^{\sharp}_1$ under which $\Vec_{Y(A)}$ corresponds to $[V(A)]_1$.
\end{theorem}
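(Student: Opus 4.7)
My plan is to construct an explicit $k$-linear tensor functor $F \colon k[\cV] \to \uPerm_k(G;\mu)$, pass to additive Karoubi envelopes, and show that the resulting functor identifies $k[\cV]^{\sharp}_1$ with $\uPerm_k(G;\mu)^{\mathrm{kar}}$. On objects I set $F([V(A)]) = \Vec_{Y(A)}$. For a morphism $[M] \in k[\rM_{B,A}(\bF_2)]$ I set $F([M]) = 1_{Y_{\mathrm{supp}(M)}}$, where for $D \subseteq B \times A$ the subset $Y_D \subseteq Y(B) \times Y(A) = Y(B \times A)$ (using Proposition~\ref{prop:Yprod}) is the clopen $G$-stable locus $\{y : \mathrm{im}(y) \subseteq D\} = \coprod_{\emptyset \ne D' \subseteq D} X(D')$. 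Identifying $\rM_{B,A}(\bF_2)$ with $\cP(B \times A)$ via support, this is well-defined. The tensor structure on $k[\cV]$ uses $[V(A)] \uotimes [V(B)] = [V(A \times B)]$ and Kronecker products of matrices; on the $\uPerm$ side, $\Vec_{Y(A)} \uotimes \Vec_{Y(B)} = \Vec_{Y(A \times B)}$ by Proposition~\ref{prop:Yprod}, and the identity $\mathrm{supp}(M \otimes N) = \mathrm{supp}(M) \times \mathrm{supp}(N)$ gives compatibility on morphisms.

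The main obstacle is checking that $F$ respects composition, which is where the numerical value of $\mu$ is essential. Given $E \subseteq C \times B$ and $D \subseteq B \times A$, applying the push-forward formula from \S\ref{ss:push} together with the orbit decomposition of $Y(C \times B \times A)$ (the evident analogue of Proposition~\ref{prop:Xprod}) yields
\[1_{Y_E} \cdot 1_{Y_D} = \sum_{\emptyset \ne F \subseteq N(E,D)} (-2)^{|F| - |p_{13}(F)|}\, 1_{X(p_{13}(F))},\]
where $N(E,D) := \{(c,b,a) : (c,b)\in E,\, (b,a)\in D\}$ and the weight is $\mu(X(F) \to X(p_{13}(F)))$ computed from $\mu(X(n)) = (-2)^{n-1}$ (Theorem~\ref{thm:meas}) via regularity. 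Grouping by $F' := p_{13}(F)$ and parametrizing each $F$ by a choice of nonempty fiber $J_{c,a} \subseteq B_{c,a} := \{b : (c,b)\in E,\, (b,a)\in D\}$ for every $(c,a) \in F'$, the inner sum factors as
\[\prod_{(c,a) \in F'} \sum_{\emptyset \ne J \subseteq B_{c,a}} (-2)^{|J|-1} \;=\; \prod_{(c,a) \in F'} \frac{1-(-1)^{|B_{c,a}|}}{2},\]
which equals $1$ when every $|B_{c,a}|$ is odd and $0$ otherwise. Since $|B_{c,a}| \bmod 2 = (M_E M_D)_{c,a}$, the surviving $F'$ are precisely the nonempty subsets of $\mathrm{supp}(M_E M_D)$, and hence $1_{Y_E} \cdot 1_{Y_D} = 1_{Y_{\mathrm{supp}(M_E M_D)}} = F([M_E M_D])$. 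The elementary identity $\sum_{k \ge 1} \binom{n}{k}(-2)^k = (-1)^n - 1$ is exactly what converts the measure-weighted fiber sum into $\bF_2$-matrix multiplication; with any other value of $\mu$ it would not collapse.

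With $F$ constructed, I pass to additive Karoubi envelopes to get $F^\sharp \colon k[\cV]^\sharp \to \uPerm_k(G;\mu)^{\mathrm{kar}}$. Because $F([0_A]) = 1_{Y_\emptyset} = 0$, the central idempotent $[0_A]$ is killed, so $F^\sharp$ vanishes on the summand $k[\cV]^\sharp_0$ and restricts to $F_1 \colon k[\cV]^\sharp_1 \to \uPerm_k(G;\mu)^{\mathrm{kar}}$ sending $[V(A)]_1 \mapsto \Vec_{Y(A)}$. Fully faithfulness on generating objects is a dimension count: both $\Hom_{k[\cV]^\sharp_1}([V(A)]_1, [V(B)]_1) = (1-[0_B])\,k[\rM_{B,A}(\bF_2)]\,(1-[0_A])$ and $\Hom(\Vec_{Y(A)}, \Vec_{Y(B)})$ have dimension $2^{|A||B|}-1$, and the basis $\{[M] - [0] : M \ne 0\}$ of the source maps bijectively to the basis $\{1_{Y_D} : \emptyset \ne D \subseteq B \times A\}$ of the target, which is related to the orbit basis $\{1_{X(D)}\}$ by a unipotent triangular change. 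Essential surjectivity follows from Propositions~\ref{prop:Gset} and~\ref{prop:E-smooth}: every object of $\uPerm_k(G;\mu)^{\mathrm{kar}}$ reduces to a summand of some $\Vec_{X(n)/\Gamma}$, and $\Vec_{X(n)/\Gamma}$ is the image of the averaging idempotent $\tfrac{1}{|\Gamma|}\sum_{\sigma\in\Gamma}[\sigma]$ on $\Vec_{X(n)}$, which in turn is a summand of $\Vec_{Y([n])} = F_1([V([n])]_1)$.
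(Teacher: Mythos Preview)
Your proof is correct and follows essentially the same route as the paper. Both construct the functor $[V(A)]\mapsto \Vec_{Y(A)}$, $[M]\mapsto 1_{Y(\mathrm{supp}\,M)}$, verify functoriality via the binomial identity $\sum_{\emptyset\ne J\subseteq S}(-2)^{|J|-1}=\tfrac{1-(-1)^{|S|}}{2}$, and deduce full faithfulness from the unitriangular change of basis between $\{1_{Y(D)}\}$ and $\{1_{X(D)}\}$. The only organizational difference is that the paper isolates the push-forward computation as a separate lemma (Lemma~\ref{lem:mu-1}: $Y(f)_*(1_{Y(A)})=1_{Y(D)}$ with $D$ the odd-fiber locus) and then applies it, whereas you carry out the equivalent calculation inline when checking composition; your essential-surjectivity argument via the averaging idempotent on $\Vec_{X(n)}$ is a bit more explicit than the paper's one-line claim, but amounts to the same thing.
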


The most important consequence of this theorem is the following corollary:

\begin{corollary}
The category $\uPerm_k(G; \mu)^{\rm kar}$ is a semi-simple pre-Tannakian category.
\end{corollary}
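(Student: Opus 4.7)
The plan is to deduce this immediately from Theorem~\ref{thm:mu} together with the recollection at the end of \S\ref{ss:mu-prelim} that $k[\cV]^{\sharp}_1$ is a semi-simple pre-Tannakian category. Concretely, the first step is to invoke Theorem~\ref{thm:mu} to obtain an equivalence of tensor categories $\uPerm_k(G;\mu)^{\rm kar} \cong k[\cV]^{\sharp}_1$. The second step is to transport the desired properties across this equivalence: both the pre-Tannakian axioms---abelian with finite-length objects, finite-dimensional $\Hom$ spaces, rigidity, and $\End(\bbone)=k$---and semi-simplicity of the abelian structure are manifestly invariants of a tensor category up to equivalence, so each one passes to $\uPerm_k(G;\mu)^{\rm kar}$ once it is known for $k[\cV]^{\sharp}_1$. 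The third step is to cite \cite[Theorem~3.12]{dblexp} (quoted verbatim in \S\ref{ss:mu-prelim}) which supplies precisely that knowledge about $k[\cV]^{\sharp}_1$.

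In this sense the corollary has no genuine content of its own; the step from Theorem~\ref{thm:mu} to the corollary is purely formal, and there is no obstacle at this level. The substantive work is already packaged inside Theorem~\ref{thm:mu}, which identifies the Karoubi envelope with the specific linearized category $k[\cV]^{\sharp}_1$, and inside the cited result of \cite{dblexp}, which in turn rests on the structural analysis of the monoid algebra $k[\rM_n(\bF_2)]$ by Kuhn \cite{Kuhn} and Kov\'acs \cite{Kovacs}. One small sanity check worth writing out is that passing to the Karoubi envelope is genuinely necessary: the category $\uPerm_k(G;\mu)$ itself is almost never abelian, so the corollary is really a statement about $\uPerm_k(G;\mu)^{\rm kar}$, and the equivalence in Theorem~\ref{thm:mu} is tailored so that the Karoubi-completed side matches the already-Karoubi-closed category $k[\cV]^{\sharp}_1$. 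With that observation in place, the proof reduces to the one-line invocation above.
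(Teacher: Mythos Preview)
Your proposal is correct and matches the paper's approach exactly: the paper's proof is the single sentence ``As we already mentioned, $k[\cV]^{\sharp}_1$ is semi-simple and pre-Tannakian,'' which is precisely your invocation of Theorem~\ref{thm:mu} together with \cite[Theorem~3.12]{dblexp} as recalled in \S\ref{ss:mu-prelim}. Your additional remarks about transporting properties across equivalences and the role of the Karoubi envelope are sound but go beyond what the paper spells out.
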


\begin{proof}
As we already mentioned, $k[\cV]^{\sharp}_1$ is semi-simple and pre-Tannakian.
\end{proof}

We make a few more remarks concerning the theorem.
\begin{enumerate}
\item The theorem shows that $\End(\Vec_{Y(n)})$ is the quotient of the monoid algebra $k[\rM_n(\bF_2)]$ by the $k$-span of the element $[0]$ (which is a 2-sided ideal). The structure of this algebra is described in \cite{Kovacs}.
\item The simple objects of $\uPerm_k(G; \mu)^{\rm kar}$ naturally correspond to simple $k[\GL_n(\bF_2)]$-modules, for $n \ge 1$. This follows from \cite[Theorem~1.1]{Kuhn}; see also \cite[Remark~3.8]{dblexp}.
\item In \cite[Part~III]{repst}, we introduce the completed group algebra $A(G)$ of $G$, and the category $\uRep_k(G; \mu)$ of smooth $A(G)$-modules. Using the arguments there, one can show that $\uRep_k(G; \mu)$ is equivalent to the ind-completion of $\uPerm_k(G; \mu)^{\rm kar}$. This provides a concrete realization of the latter category.
\item The category $k[\cV]^{\sharp}_1$ was studied in \cite{dblexp} due to its fast (double exponential) growth. From the perspective of $\uPerm_k(G; \mu)$, this growth rate comes from Remark~\ref{rmk:growth}.
\item Since $\mu$ is valued in $\bZ_{(2)}$, it is possible to consider the category $\uPerm_k(G; \mu)$ when $k$ is a field of characteristic~2. This could be an interesting object for further study.
\end{enumerate}

\subsection{Proof}

We require a few lemmas before proving the theorem. The following is the key computation:

\begin{lemma} \label{lem:mu-1}
Let $f \colon A \to B$ be a map of finite sets, and let $D$ be the subset of $B$ consisting of points $b$ such that $f^{-1}(b)$ has odd cardinality. Then $Y(f)_*(1_{Y(A)})=1_{Y(D)}$.
\end{lemma}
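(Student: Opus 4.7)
The plan is to unfold all three pieces of the equation--the decomposition of $Y(A)$, the definition of $Y(f)_*$, and the explicit values of $\mu$ on elementary maps--and then reorganize the resulting sum by partitioning over the image subset $E = f(C)\subseteq B$.

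First I would use the identification $Y(A)=\coprod_{\emptyset\ne C\subseteq A} X(C)$ to write $1_{Y(A)} = \sum_{\emptyset\ne C\subseteq A} 1_{X(C)}$. Since $X(f)$ restricts on the component $X(C)$ to the map $X(f\vert_C)\colon X(C)\to X(f(C))$, and this map is a map of transitive $\sE$-smooth $G$-sets, the formula from \S\ref{ss:push} and the stated value $\mu(X(n)\to X(m))=(-2)^{n-m}$ give
\begin{displaymath}
Y(f)_*(1_{X(C)}) = (-2)^{\#C-\#f(C)}\, 1_{X(f(C))}.
\end{displaymath}
Hence $Y(f)_*(1_{Y(A)}) = \sum_{\emptyset\ne E\subseteq B} c_E\, 1_{X(E)}$, where $c_E = \sum_{C: f(C)=E} (-2)^{\#C-\#E}$.

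The core computation is to show $c_E=1$ if $E\subseteq D$ and $c_E=0$ otherwise. For fixed $E$, a subset $C\subseteq A$ with $f(C)=E$ is the same as a choice, for each $b\in E$, of a non-empty subset $C_b\subseteq f^{-1}(b)$; write $n_b = \#f^{-1}(b)$. The sum factors as
\begin{displaymath}
c_E = (-2)^{-\#E}\prod_{b\in E}\Big(\sum_{k=1}^{n_b}\binom{n_b}{k}(-2)^k\Big) = (-2)^{-\#E}\prod_{b\in E}\bigl((-1)^{n_b}-1\bigr),
\end{displaymath}
by the binomial theorem. Each factor is $0$ when $n_b$ is even (including $n_b=0$, which covers $b\notin f(A)$) and $-2$ when $n_b$ is odd. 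Therefore $c_E=0$ unless every $b\in E$ lies in $D$, i.e.\ $E\subseteq D$, in which case the product contributes $(-2)^{\#E}$ and $c_E=1$.

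Summing gives $Y(f)_*(1_{Y(A)}) = \sum_{\emptyset\ne E\subseteq D} 1_{X(E)} = 1_{Y(D)}$, using the decomposition of $Y(D)$ as a disjoint union of $X(E)$'s (and noting that when $D=\varnothing$ the sum is empty and $Y(D)$ is empty, so both sides vanish). There is no serious obstacle here: the only point requiring care is to consistently treat the case $n_b=0$ and the edge case $D=\varnothing$, which the factorization above handles uniformly.
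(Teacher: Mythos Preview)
Your proof is correct and follows essentially the same route as the paper: decompose $1_{Y(A)}$ into indicators $1_{X(C)}$, push forward each using $\mu(X(C)\to X(f(C)))=(-2)^{\#C-\#f(C)}$, collect terms by $E=f(C)$, and factor the resulting coefficient over the fibers. The only cosmetic difference is that the paper cites the earlier computation in Proposition~\ref{prop:Ymeas} for the inner sum $\sum_{\emptyset\ne S\subseteq f^{-1}(b)}(-2)^{\#S}$, whereas you redo the binomial identity in place; your explicit handling of the edge cases $n_b=0$ and $D=\varnothing$ is a nice touch that the paper leaves implicit.
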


\begin{proof}
We have $1_{Y(A)}=\sum_{C \subset A} 1_{X(C)}$, and so
\begin{displaymath}
Y(f)_*(1_{Y(A)}) = \sum_{C \subset A, C \ne \emptyset} (-2)^{\# C - \# f(C)} \cdot 1_{X(f(C))}.
\end{displaymath}
The power of $-2$ here is the measure of the map $X(C) \to X(f(C))$ induced by $f$. To compute this sum, we fix $E \subset B$ and determine the coefficient $\gamma(E)$  of $1_{X(E)}$. From the above formula, we find
\begin{displaymath}
\gamma(E) = (-2)^{-\# E} \sum_{C \subset A, f(C)=E} (-2)^{\# C}.
\end{displaymath}
We think of $C$ as varying over subsets of $f^{-1}(E)$ that meet each fiber. In other words, choosing $C$ amounts to choosing a non-empty subset of $f^{-1}(x)$ for each $x \in E$. We thus find
\begin{displaymath}
\gamma(E) = (-2)^{-\# E} \prod_{x \in E} \big( \sum_{S \subset f^{-1}(x), S \ne \emptyset} (-2)^{\# S} \big).
\end{displaymath}
As we saw in the proof of Proposition~\ref{prop:Ymeas}, the sum over $S$ is equal to $-2$ if $\# f^{-1}(x)$ is odd, and~0 otherwise. We thus see that $\gamma(E)=1$ if $E \subset D$, and $\gamma(E)=0$ otherwise. Hence
\begin{displaymath}
Y(f)_*(1_{Y(A)}) = \sum_{E \subset D} 1_{X(E)} = 1_{Y(D)},
\end{displaymath}
which completes the proof.
\end{proof}

For $\alpha \in \rM_{B,A}(\bF_2)$, we define a $Y(B) \times Y(A)$ matrix $\alpha^*$ as follows. Let $D \subset B \times A$ be the set of pairs $(y,x)$ such that $\alpha_{y,x} \ne 0$. We then define $\alpha^*=1_{Y(D)}$. Here we have identified $Y(B) \times Y(A)$ with $Y(B \times A)$ via Proposition~\ref{prop:Yprod}, which contains $Y(D)$ as a subset. Note that if $\alpha$ is the zero matrix then $D$ is empty, and so $\Lambda(\alpha)$ is also zero.

\begin{lemma} \label{lem:mu-2}
The construction $(-)^*$ is compatible with matrix multiplication.
\end{lemma}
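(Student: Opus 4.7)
The plan is to unfold the matrix product $\beta^* \cdot \alpha^*$ using Proposition~\ref{prop:Yprod} and then recognize the result via Lemma~\ref{lem:mu-1}. Fix $\alpha \in \rM_{B,A}(\bF_2)$ and $\beta \in \rM_{C,B}(\bF_2)$, and write $D_\alpha \subset B \times A$ and $D_\beta \subset C \times B$ for their supports, so $\alpha^* = 1_{Y(D_\alpha)}$ and $\beta^* = 1_{Y(D_\beta)}$. Using Proposition~\ref{prop:Yprod}, I would identify $Y(C) \times Y(B) \times Y(A)$ with $Y(C \times B \times A)$. Under this identification, the pullbacks unravel to
\[
p_{23}^*(\beta^*) = 1_{Y(D_\beta \times A)}, \qquad p_{12}^*(\alpha^*) = 1_{Y(C \times D_\alpha)},
\]
since each is the indicator of the set of continuous maps $\fX \to C \times B \times A$ whose image satisfies the corresponding two-coordinate constraint.

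Next, for any $E_1, E_2 \subset C \times B \times A$ one has $1_{Y(E_1)} \cdot 1_{Y(E_2)} = 1_{Y(E_1 \cap E_2)}$, because a continuous map lands in both $E_1$ and $E_2$ exactly when it lands in $E_1 \cap E_2$. This gives $p_{23}^*(\beta^*) \cdot p_{12}^*(\alpha^*) = 1_{Y(F)}$ with
\[
F = (D_\beta \times A) \cap (C \times D_\alpha) = \{(c,b,a) : \beta_{c,b} = 1 = \alpha_{b,a}\}.
\]

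Finally, I would push forward along $p_{13}$. Let $g \colon F \to C \times A$ be the restriction of the set-level projection. By functoriality of $Y$, the restriction of $p_{13} \colon Y(C \times B \times A) \to Y(C \times A)$ to $Y(F)$ is exactly $Y(g)$, so Lemma~\ref{lem:mu-1} yields
\[
(p_{13})_*(1_{Y(F)}) = Y(g)_*(1_{Y(F)}) = 1_{Y(D)},
\]
where $D \subset C \times A$ consists of those $(c,a)$ for which the fiber $g^{-1}(c,a) = \{b \in B : \beta_{c,b} = 1 = \alpha_{b,a}\}$ has odd cardinality. But $(\beta\alpha)_{c,a} = \sum_b \beta_{c,b}\alpha_{b,a} \in \bF_2$ is precisely $1$ when that count is odd, so $D$ is the support of $\beta\alpha$ and $1_{Y(D)} = (\beta\alpha)^*$, proving the identity $\beta^* \cdot \alpha^* = (\beta\alpha)^*$.

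I do not expect a serious obstacle here; the only point worth flagging is that Lemma~\ref{lem:mu-1} is stated for an arbitrary map of finite sets and so applies to $g$ without any surjectivity hypothesis (points of $C \times A$ outside $\im g$ have empty, hence even-sized, fiber, and are automatically excluded from $D$). Everything else is routine bookkeeping with indicator functions and the explicit formulas for pullback and pushforward.
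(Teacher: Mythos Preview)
Your proof is correct and follows essentially the same route as the paper's: both identify the product $p_{23}^*(\beta^*)\cdot p_{12}^*(\alpha^*)$ with the indicator of $Y$ of the set $\{(c,b,a):\beta_{c,b}=\alpha_{b,a}=1\}$ (you write it as an intersection of cylinder sets, the paper as the fiber product $E\times_B D$, but these are the same subset of $C\times B\times A$), and then apply Lemma~\ref{lem:mu-1} to the projection to $C\times A$ to obtain $1_{Y(F)}=(\beta\alpha)^*$. Your remark that Lemma~\ref{lem:mu-1} requires no surjectivity hypothesis is exactly the point that makes the argument go through.
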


\begin{proof}
Let $\alpha \in \rM_{B,A}(\bF_2)$ and $\beta \in \rM_{C,B}(\bF_2)$ be given. We show that $(\beta \alpha)^*=\beta^* \alpha^*$. Let $D \subset B \times A$ be the indices where $\alpha$ is~1, and let $E \subset C \times B$ and $F \subset C \times A$ be similarly defined for $\beta$ and $\beta \alpha$. Consider the projection map $\pi \colon E \times_B D \to C \times A$. The fiber $\pi^{-1}(z,x)$ is in bijection with the set of $y \in B$ such that $\alpha_{y,x}=1$ and $\beta_{z,y}=1$. Thus in the expression $(\beta \alpha)_{z,x} = \sum_{y \in B} \beta_{z,y} \alpha_{y,x}$, we see that the number of summands that are~1 is the cardinality of the set $\pi^{-1}(x,z)$. We thus see that $F$ is exactly the set of pairs $(z,x)$ for which $\pi^{-1}(z,x)$ has odd cardinality.

Now, the product $\beta^* \alpha^*$ is, by definition, the push-forward of the function $p_{23}^*(1_{Y(E)}) \cdot p_{12}^*(1_{Y(F)})$ on $Y(C) \times Y(B) \times Y(A)$ along $p_{13}$. This function is the indicator function of $Y(E) \times_{Y(B)} Y(D)$. The map $p_{13} \colon Y(E) \times_{Y(B)} Y(D) \to Y(C) \times Y(A)$ is identified with the map $Y(\pi) \colon Y(E \times_B D) \to Y(C \times A)$ via Proposition~\ref{prop:Yprod}. We have $Y(\pi)_*(1_{Y(E \times_B D)})=1_{Y(F)}$ by Lemma~\ref{lem:mu-1}. We thus find $\beta^* \alpha^*=1_{Y(F)}=(\beta \alpha)^*$, which completes the proof.
\end{proof}

\begin{lemma} \label{lem:mu-3}
The construction $(-)^*$ is compatible with Kronecker products.
\end{lemma}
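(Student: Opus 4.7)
The plan is to unfold both sides as indicator functions of explicit subsets of appropriate $Y$-sets and then check that the two agree under the canonical identifications provided by Proposition~\ref{prop:Yprod}.

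Fix $\alpha \in \rM_{B,A}(\bF_2)$ with support $D \subset B \times A$ and $\alpha' \in \rM_{B',A'}(\bF_2)$ with support $D' \subset B' \times A'$. The Kronecker product $\alpha \otimes \alpha'$ is an $((B \times B') \times (A \times A'))$-matrix whose entry at $((b,b'),(a,a'))$ is $\alpha_{b,a}\,\alpha'_{b',a'}$. Its support is therefore canonically identified with $D \times D'$ under the shuffle isomorphism $(B \times B') \times (A \times A') \cong (B \times A) \times (B' \times A')$. By definition of $(-)^*$, we have $(\alpha \otimes \alpha')^* = 1_{Y(D \times D')}$, viewed inside $\cF(Y((B \times A) \times (B' \times A')))$.

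For the right-hand side, the Kronecker product $\alpha^* \uotimes (\alpha')^* = 1_{Y(D)} \uotimes 1_{Y(D')}$ is, by the definition of Kronecker product of matrices in \S\ref{ss:push}, the function
$$((y,x),(y',x')) \longmapsto 1_{Y(D)}(y,x) \cdot 1_{Y(D')}(y',x')$$
on $(Y(B) \times Y(A)) \times (Y(B') \times Y(A'))$, i.e., the indicator function of the ordinary product $Y(D) \times Y(D')$ sitting inside this space.

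The key step is that Proposition~\ref{prop:Yprod}, applied with trivial base $C = \bone$, gives canonical natural isomorphisms $Y(S) \times Y(T) \cong Y(S \times T)$ for finite sets $S,T$. Naturality in $S$ and $T$ implies that the inclusion $Y(D) \times Y(D') \hookrightarrow (Y(B) \times Y(A)) \times (Y(B') \times Y(A'))$ corresponds, after the canonical reordering, to the inclusion $Y(D \times D') \hookrightarrow Y((B \times A) \times (B' \times A'))$. Hence the two indicator functions coincide, establishing $(\alpha \otimes \alpha')^* = \alpha^* \uotimes (\alpha')^*$. The degenerate case where $\alpha$ or $\alpha'$ is the zero matrix is automatic, since then one of $D, D'$ is empty and both sides vanish.

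There is no real obstacle here: the argument is a diagram chase through canonical identifications, and the only care needed is in bookkeeping the shuffle isomorphism between $(B \times B') \times (A \times A')$ and $(B \times A) \times (B' \times A')$ that is implicit in the definition of the Kronecker product.
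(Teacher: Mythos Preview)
Your proof is correct and follows essentially the same route as the paper: both arguments identify the support of the Kronecker product $\alpha \otimes \alpha'$ with $D \times D'$ under the shuffle isomorphism, compute the Kronecker product of the indicator functions $1_{Y(D)}$ and $1_{Y(D')}$ as $1_{Y(D) \times Y(D')}$, and then invoke Proposition~\ref{prop:Yprod} to identify $Y(D) \times Y(D')$ with $Y(D \times D')$. Your extra remarks on naturality and the degenerate case are fine but not needed.
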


\begin{proof}
Let $\alpha \in \rM_{B,A}(\bF_2)$, let $\alpha' \in \rM_{B',A'}(\bF_2)$, and let $\beta=\alpha' \otimes \alpha$ be the Kronecker product, which belongs to $\rM_{B' \times B, A' \times A}(\bF_2)$. Let $D \subset B \times A$ be the indices at which $\alpha$ is~1, and let $D' \subset B' \times A'$ and $E \subset (B' \times B) \times (A' \times A)$ be defined similarly for $\alpha'$ and $\beta$. By definition, $\beta_{(y',y),(x',x)}=\alpha'_{y',x'} \cdot \alpha_{y,x}$, and so we see that $E$ is identified with $D' \times D$ under the canonical isomorphism between $(B' \times B) \times (A' \times A)$ and $(B' \times A') \times (B \times A)$.

A similar computation shows that the Kronecker product of $1_{Y(D)} \in \Mat_{Y(B),Y(A)}$ and $1_{Y(D')} \in \Mat_{Y(B'),Y(A')}$ is $1_{Y(D) \times Y(D')}$, where we again use a similar identification of fourfold products. Since $Y(D) \times Y(D')=Y(D \times D')$, the result follows.
\end{proof}

\begin{proof}[Proof of Theorem~\ref{thm:mu}]
We have a functor $\Phi \colon k[\cV] \to \uPerm_k(G; \mu)$ defined on objects by $\Phi([V(A)])=\Vec_{Y(A)}$ and on morphisms by $\Phi([\alpha])=\alpha^*$. It is easy to see that $\Phi$ preserves identity maps, and it is compatible with composition by Lemma~\ref{lem:mu-2}. The functor $\Phi$ is also naturally symmetric monoidal; the key points here are Proposition~\ref{prop:Yprod} and Lemma~\ref{lem:mu-3}.

The functor $\Phi$ naturally induces a functor $\Phi' \colon k[\cV]^{\sharp}_1 \to \uPerm_k(G; \mu)^{\rm kar}$. The functor $\Phi'$ contains $\Vec_{Y(n)}$ in its image, and is thus essentially surjective, since every object of the target is a summand of sums of these basic objects. To complete the proof, it suffices to show that $\Phi'$ is fully faithful. For this, it is enough to show that $\Phi'$ induces an isomorphism
\begin{displaymath}
\Hom_{k[\cV]}([V(A)]_1, [V(B)]_1) \to \Hom(\Vec_{Y(A)}, \Vec_{Y(B)}).
\end{displaymath}
The source above has a basis consisting of the elements $[\alpha]_1$, as $\alpha$ varies over the non-zero elements of $\rM_{B,A}(\bF_2)$. The target has a basis consisting of the matrices $1_{Y(D)}$, with $D \subset B \times A$ non-empty; indeed, note that there is an upper-triangular change of basis between these functions and the similarly indexed basis $1_{X(D)}$. Since $\Phi'$ bijectively maps one basis to the other, the result follows.
\end{proof}

\section{Representation theory at \texorpdfstring{$\nu$}{ν}} \label{s:nu}

\subsection{Preliminaries}

Let $\BB$ be the Boolean semiring. This ring has two elements,~0 and~1. Addition and multiplication are commutative and associative, and given by
\begin{align*}
0+0 &=0, & 0+1=1+0 &=1, & 1+1 &=1 \\
0 \cdot 0 &=0, & 0 \cdot 1 =1 \cdot 0 &=0, & 1 \cdot 1 &=1
\end{align*}
Let $\cW$ be the following tensor category:
\begin{itemize}
\item The objects are symbols $W(A)$ where $A$ is a finite set.
\item A map $W(A) \to W(B)$ is a $B \times A$ matrix with entries in $\BB$.
\item Composition is matrix multiplication.
\item The tensor product is defined just as for $\cV$.
\end{itemize}
The category $\cW$ is equivalent to the category of finite free $\BB$-modules.

Fix a field $k$ of characteristic~0. Let $k[\cW]$ be the $k$-linearization of $\cW$, and let $k[\cW]^{\sharp}$ be its additive-Karoubi envelope. These categories carry a tensor product $\uotimes$. As with $\cV$, we have a decomposition
\begin{displaymath}
k[\cW]^{\sharp} = k[\cW]^{\sharp}_0 \oplus k[\cW]^{\sharp}_1.
\end{displaymath}
It is still true that each summand is closed under $\uotimes$. However, \cite[Theorem~3.12]{dblexp} does not apply in this context, and is in fact false: $k[\cW]^{\sharp}_1$ is not semi-simple. Indeed, the monoid algebra $k[\rM_3(\BB)]$ is not semi-simple by \cite{Bremner}.

\begin{remark}
One can view a $B \times A$ matrix with entries in $\BB$ as a correspondence between the sets $A$ and $B$, and matrix multiplication agrees with composition of correspondences. Thus $\cW$ is the correspondence category; see \cite{BT3} for details.
\end{remark}

\subsection{Main results} \label{ss:nu-thm}

The following is the main result of \S \ref{s:nu}:

\begin{theorem} \label{thm:nu}
We have an equivalence of tensor categories $\uPerm_k(G; \nu)^{\rm kar} \cong k[\cW]^{\sharp}_1$ under which $\Vec_{Y(A)}$ corresponds to $[W(A)]_1$.
\end{theorem}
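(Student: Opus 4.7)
The plan is to mirror the proof of Theorem~\ref{thm:mu} very closely, replacing $\bF_2$-linear algebra with $\BB$-arithmetic and adjusting the key combinatorial identity accordingly. Concretely, I will construct a $k$-linear symmetric monoidal functor $\Psi \colon k[\cW] \to \uPerm_k(G; \nu)$ on objects by $[W(A)] \mapsto \Vec_{Y(A)}$, and on morphisms by sending a Boolean matrix $\alpha \in \rM_{B,A}(\BB)$ to the function $\alpha^* := 1_{Y(D)}$, where $D \subset B \times A$ is the support of $\alpha$, viewed inside $Y(B) \times Y(A) \cong Y(B \times A)$ via Proposition~\ref{prop:Yprod}. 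The zero matrix is sent to $0$.

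The heart of the argument is the $\nu$-analog of Lemma~\ref{lem:mu-1}: for any map $f \colon A \to B$ of finite sets,
\begin{displaymath}
Y(f)_*(1_{Y(A)}) = 1_{Y(f(A))}.
\end{displaymath}
To prove this, I would copy the proof of Lemma~\ref{lem:mu-1} verbatim, substituting $(-1)^{n-1}$ for $(-2)^{n-1}$ as the value of $\nu(X(n))$, and invoking the substitute identity
\begin{displaymath}
\sum_{\emptyset \ne S \subset f^{-1}(x)} (-1)^{\#S} \;=\; \begin{cases} -1 & \text{if } f^{-1}(x) \ne \emptyset, \\ 0 & \text{otherwise,} \end{cases}
\end{displaymath}
which is the $(-1)$-analog of the $(-2)$-computation from Proposition~\ref{prop:Ymeas}. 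Extracting the coefficient $\gamma(E)$ of $1_{X(E)}$ in the expansion of $Y(f)_*(1_{Y(A)})$ then gives $\gamma(E)=1$ precisely when every $x \in E$ has a non-empty fiber, i.e.\ precisely when $E \subset f(A)$ — which is the image condition.

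With this identity in hand, compatibility of $(-)^*$ with matrix multiplication transcribes the proof of Lemma~\ref{lem:mu-2} directly: for $\alpha \in \rM_{B,A}(\BB)$ and $\beta \in \rM_{C,B}(\BB)$ with supports $D, E$, the product $\beta\alpha$ in $\BB$ has support exactly $\pi(E \times_B D) \subset C \times A$, where $\pi$ is the projection, because by definition of Boolean addition $(\beta\alpha)_{z,x}=1$ if and only if the fiber $\pi^{-1}(z,x)$ is non-empty. Applying the key lemma to $\pi$ gives $\beta^* \alpha^* = 1_{Y(\pi(E \times_B D))} = (\beta\alpha)^*$. Compatibility with Kronecker products is formally identical to Lemma~\ref{lem:mu-3}, since only set-theoretic identifications among the $Y$ sets are used. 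Together these show $\Psi$ is a well-defined $k$-linear symmetric monoidal functor.

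It remains to pass to additive-Karoubi envelopes, obtaining $\Psi' \colon k[\cW]^{\sharp}_1 \to \uPerm_k(G;\nu)^{\rm kar}$, and to verify that it is an equivalence. Essential surjectivity is immediate: every $\Vec_{X(n)}$ is a summand of $\Vec_{Y(n)}$ by the decomposition $Y(n)=\coprod_{\emptyset \ne C \subset [n]} X(C)$. Full faithfulness reduces to showing that $\Psi'$ induces an isomorphism
\begin{displaymath}
\Hom_{k[\cW]}([W(A)]_1, [W(B)]_1) \;\longrightarrow\; \Hom(\Vec_{Y(A)}, \Vec_{Y(B)});
\end{displaymath}
the source has a basis indexed by non-zero Boolean matrices $\alpha$, the target has a basis $\{1_{X(D)}\}_{D \ne \emptyset}$ related to $\{1_{Y(D)}\}_{D \ne \emptyset}$ by an upper-triangular change of basis, and $\Psi'$ sends $[\alpha]_1$ to $1_{Y(\mathrm{supp}(\alpha))}$, bijectively matching the two bases. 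I expect no serious obstacle: the only delicate point is the interplay between the idempotent $[0]$-splitting (which is what allows the zero Boolean matrix to be discarded) and the parametrization of bases by non-empty subsets of $B \times A$, and this is handled exactly as in the $\mu$ case. The striking feature of the proof is that essentially the same argument works for both measures; the divergent representation-theoretic consequences (semi-simple vs.\ not) come from the underlying algebra $k[\rM_n(\BB)]$, not from the functor construction itself.
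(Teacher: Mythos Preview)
Your proposal is correct and follows essentially the same route as the paper: the paper proves the $\nu$-analog of Lemma~\ref{lem:mu-1} (with the image $f(A)$ replacing the odd-fiber locus), deduces compatibility of $(-)^*$ with matrix multiplication and Kronecker products, and then repeats the proof of Theorem~\ref{thm:mu} verbatim. Your identification of the key divergence---that in $\BB$ the support of $\beta\alpha$ is the \emph{image} of $\pi$ rather than the odd-fiber locus---is exactly the point the paper highlights.
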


We now explain an important consequence of the theorem. If every nilpotent endomorphism in $\uPerm_k(G; \nu)$ had trace~0 then the argument in \cite[\S 13.2]{repst} would show that $\uPerm_k(G; \nu)^{\rm kar}$ is semi-simple. Since $k[\cW]^{\sharp}_1$ is known to be non-semi-simple (e.g., by \cite{Bremner}), it follows that there are nilpotent endomorphisms in $\uPerm_k(G; \nu)$ with non-zero trace. In fact, $\Vec_{Y(3)}$ has a nilpotent endomorphism with non-zero trace, which one can write down explicitly using \cite{Bremner}. As a consequence, it follows that $\uPerm_k(G; \nu)$ cannot be embedded into any pre-Tannakian category.

Suppose that $H$ is an oligomorphic group equipped with a regular measure $\lambda$ valued in $k$. We showed, in \cite[Corollary~7.19]{repst}, that a nilpotent endomorphism in $\uPerm_k(H; \lambda)$ has trace~0 provided $\lambda$ satisfies Property~(P). Roughly speaking, (P) means that it is possible to reduce $\lambda$ modulo infinitely many primes. At the time, we thought this hypothesis seemed rather artificial. However, the case of $(G, \nu)$ shows that there may be something significant to this condition; indeed, since $\nu$ is valued in $\bQ$ and no proper subring, it cannot be reduced modulo any prime. (It is perhaps suggestive that that $\mu$ measure is better behaved and can be reduced modulo the prime~2.)

\subsection{Proof}

The proof of Theorem~\ref{thm:nu} is quite similar to that of Theorem~\ref{thm:mu}, so we omit most of the details in what follows.

\begin{lemma} \label{lem:nu-1}
Let $f \colon A \to B$ be a map of non-empty finite sets with image $D$. Then $Y(f)_*(1_{Y(A)})=1_{Y(D)}$.
\end{lemma}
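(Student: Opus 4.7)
The plan is to mimic the proof of Lemma~\ref{lem:mu-1} essentially verbatim, substituting the measure values of $\nu$ for those of $\mu$. Recall from the remark following Theorem~\ref{thm:meas} that $\nu(X(n) \to X(m)) = (-1)^{n-m}$. First I would expand
\begin{displaymath}
1_{Y(A)} = \sum_{\emptyset \ne C \subset A} 1_{X(C)}
\end{displaymath}
and apply $Y(f)_*$ term by term using $\nu$, obtaining
\begin{displaymath}
Y(f)_*(1_{Y(A)}) = \sum_{\emptyset \ne C \subset A} (-1)^{\#C - \#f(C)} \cdot 1_{X(f(C))}.
\end{displaymath}

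Next, for each non-empty $E \subset B$, I would collect terms to compute the coefficient $\gamma(E)$ of $1_{X(E)}$. Parameterizing the subsets $C$ with $f(C)=E$ by the choice of a non-empty subset of $f^{-1}(x)$ for each $x \in E$ (which forces $E \subset D = f(A)$), one obtains
\begin{displaymath}
\gamma(E) = (-1)^{-\#E} \prod_{x \in E} \Big( \sum_{\emptyset \ne S \subset f^{-1}(x)} (-1)^{\#S} \Big).
\end{displaymath}
The inner sum equals $(1 + (-1))^{\#f^{-1}(x)} - 1 = -1$ whenever $f^{-1}(x)$ is non-empty, i.e.\ whenever $x \in D$; this is the same binomial identity used in the computation of $\nu(Y(n))$ in Proposition~\ref{prop:Ymeas}. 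Hence $\gamma(E) = (-1)^{-\#E} \cdot (-1)^{\#E} = 1$ for every non-empty $E \subset D$, and $\gamma(E) = 0$ whenever $E \not\subset D$. Summing over $E$ gives
\begin{displaymath}
Y(f)_*(1_{Y(A)}) = \sum_{\emptyset \ne E \subset D} 1_{X(E)} = 1_{Y(D)},
\end{displaymath}
as desired.

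There is no real obstacle here; the lemma is in fact strictly simpler than Lemma~\ref{lem:mu-1}. In the $\mu$ case, the inner sum $\sum (-2)^{\#S}$ vanished for even fibers and forced the parity condition defining $D$. In the $\nu$ case, the inner sum equals $-1$ for \emph{every} non-empty fiber, so the only restriction that survives is $E \subset f(A) = D$, which matches $1_{Y(D)}$ exactly. The only mild point to note is that $A$ (and hence $D$) is assumed non-empty, so we do not have to worry about a degenerate contribution from $E = \emptyset$.
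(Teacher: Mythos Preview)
Your proposal is correct and follows essentially the same approach as the paper's own proof: both mimic Lemma~\ref{lem:mu-1}, replacing $(-2)$ by $(-1)$ and observing that the inner sum $\sum_{\emptyset \ne S \subset f^{-1}(x)} (-1)^{\#S}$ equals $-1$ whenever the fiber is non-empty, so $\gamma(E)=1$ exactly for $E \subset D$.
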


\begin{proof}
We follow the proof of Lemma~\ref{lem:mu-1}. Let $\gamma(E)$ be the coefficient of $1_{X(E)}$ in the push-forward. We have
\begin{align*}
\gamma(E) &= (-1)^{\# E} \sum_{C \subset A, f(C)=E} (-1)^{\# C} \\
&= (-1)^{\# E} \prod_{x \in E} \big( \sum_{S \subset f^{-1}(x), S \ne \emptyset} (-1)^{\# S} \big).
\end{align*}
As we saw in Proposition~\ref{prop:Ymeas}, the sum above is $-1$ if $S$ is non-empty, and~0 otherwise. We thus see that $\gamma(E)=1$ if $E \subset D$, and $\gamma(E)=0$ otherwise. The result follows.
\end{proof}

For $\alpha \in \rM_{B,A}(\BB)$, we define a $Y(B) \times Y(A)$ matrix $\alpha^*$ just as before: letting $D \subset B \times A$ be the set of pairs $(y,x)$ such that $\alpha_{y,x} \ne 0$, we put $\alpha^*=1_{Y(D)}$.

\begin{lemma} \label{lem:nu-2}
The construction $(-)^*$ is compatible with matrix multiplication.
\end{lemma}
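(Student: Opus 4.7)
The plan is to mirror the proof of Lemma \ref{lem:mu-2} essentially word for word, with the arithmetic modifications forced by working over $\BB$ instead of $\bF_2$. Given $\alpha \in \rM_{B,A}(\BB)$ and $\beta \in \rM_{C,B}(\BB)$, I would let $D \subset B \times A$, $E \subset C \times B$, and $F \subset C \times A$ denote the supports (entries equal to~1) of $\alpha$, $\beta$, and $\beta\alpha$ respectively, and then consider the projection $\pi \colon E \times_B D \to C \times A$, whose fiber over $(z,x)$ is in bijection with the set of $y \in B$ satisfying $\alpha_{y,x}=1$ and $\beta_{z,y}=1$.

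The one place where the proof genuinely diverges from the $\mu$ case is the description of $F$. In the Boolean semiring, $(\beta\alpha)_{z,x} = \sum_{y \in B} \beta_{z,y} \alpha_{y,x}$ equals~1 if and only if at least one summand is~1, which happens if and only if $\pi^{-1}(z,x)$ is non-empty. Hence $F$ is precisely the \emph{image} of $\pi$ (rather than the locus of odd-cardinality fibers, as in the $\bF_2$ case).

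The remainder of the argument then runs identically to Lemma \ref{lem:mu-2}. By Proposition~\ref{prop:Yprod}, the product $\beta^* \alpha^*$ is the push-forward of $1_{Y(E \times_B D)}$ along the map $Y(\pi) \colon Y(E \times_B D) \to Y(C \times A)$. Applying Lemma~\ref{lem:nu-1} in place of Lemma~\ref{lem:mu-1}, this push-forward equals $1_{Y(F)}$, the indicator of the image, which is exactly $(\beta\alpha)^*$. The degenerate case where $E \times_B D$ is empty (forcing $F$ to be empty) is handled by both sides being zero, just as for the zero matrix in the definition of $(-)^*$.

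There is no real obstacle to overcome: the whole point of Lemma~\ref{lem:nu-1} is to package the Boolean-semiring arithmetic so that the support-of-a-product computation becomes a push-forward computation. The only thing to double-check is the bookkeeping that $F = \pi(E \times_B D)$, which is immediate from the fiber description above.
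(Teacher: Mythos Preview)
Your proposal is correct and matches the paper's own proof essentially verbatim: the same sets $D,E,F$, the same projection $\pi$, the identification of $F$ as the image of $\pi$ (noted explicitly as the divergence from the $\bF_2$ case), and the same appeal to Proposition~\ref{prop:Yprod} and Lemma~\ref{lem:nu-1}. Your extra remark on the degenerate empty case is a harmless addition.
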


\begin{proof}
The proof is very similar to that of Lemma~\ref{lem:mu-2}, but it is worthwhile to go through the details. Let $\alpha \in \rM_{B,A}(\BB)$ and $\beta \in \rM_{C,B}(\BB)$ be given. Let $D \subset B \times A$ be the indices where $\alpha$ is~1, and let $E \subset C \times B$ and $F \subset C \times A$ be similarly defined for $\beta$ and $\beta \alpha$. Consider the projection map $\pi \colon E \times_B D \to C \times A$. As before, in the expression $(\beta \alpha)_{z,x} = \sum_{y \in B} \beta_{z,y} \alpha_{y,x}$ we find that the number of summands that are~1 is the cardinality of the set $\pi^{-1}(x,z)$. We thus see that $F$ is exactly the set of pairs $(z,x)$ for which $\pi^{-1}(z,x)$ is non-empty, i.e., $F$ is the image of $\pi$. This is where the present computation diverges from the one in Lemma~\ref{lem:mu-2}, due to the different addition law in $\BB$.

The product $\beta^* \alpha^*$ is the push-forward of $p_{23}^*(1_{Y(E)}) \cdot p_{12}^*(1_{Y(F)})$ on $Y(C) \times Y(B) \times Y(A)$ along $p_{13}$. This function is the indicator function of $Y(E) \times_{Y(B)} Y(D)$. The projection map $p_{13} \colon Y(E) \times_{Y(B)} Y(D) \to Y(C) \times Y(A)$ is identified with the map $Y(\pi) \colon Y(E \times_B D) \to Y(C \times B)$ via Proposition~\ref{prop:Yprod}. We have $Y(\pi)_*(1_{Y(E \times_B D)})=1_{Y(F)}$ by Lemma~\ref{lem:nu-1}. We thus find $\beta^* \alpha^*=1_{Y(F)}=(\beta \alpha)^*$, which completes the proof.
\end{proof}

\begin{lemma} \label{lem:nu-3}
The construction $(-)^*$ is compatible with Kronecker products.
\end{lemma}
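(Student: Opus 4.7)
The proof should be essentially a verbatim copy of Lemma~\ref{lem:mu-3}, because the relevant computation only involves the multiplicative law of the coefficient semiring, and the multiplicative laws of $\bF_2$ and $\BB$ are identical (namely $1 \cdot 1 = 1$ and $0 \cdot a = 0$). So the plan is to reduce everything to a statement about supports.

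First I would introduce the notation: take $\alpha \in \rM_{B,A}(\BB)$ and $\alpha' \in \rM_{B',A'}(\BB)$, let $\beta = \alpha' \otimes \alpha \in \rM_{B' \times B, A' \times A}(\BB)$, and let $D, D', E$ denote the supports (i.e.\ the sets of indices where the entries equal $1$) of $\alpha, \alpha', \beta$ respectively. The first step is to observe that by the definition of the Kronecker product,
\begin{displaymath}
\beta_{(y',y),(x',x)} = \alpha'_{y',x'} \cdot \alpha_{y,x},
\end{displaymath}
and since $\BB$ is an integral multiplicative structure (a product is $1$ iff both factors are $1$), the support $E$ is identified with $D' \times D$ under the canonical isomorphism $(B' \times B) \times (A' \times A) \cong (B' \times A') \times (B \times A)$. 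This step is formally identical to its counterpart in Lemma~\ref{lem:mu-3}.

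Next I would compute the Kronecker product of the matrices $\alpha^* = 1_{Y(D)} \in \Mat_{Y(B),Y(A)}$ and $(\alpha')^* = 1_{Y(D')} \in \Mat_{Y(B'),Y(A')}$. By the very definition of the Kronecker product of matrices over $G$-sets, this is the function on $(Y(B) \times Y(B')) \times (Y(A) \times Y(A'))$ given by $((y,y'),(x,x')) \mapsto 1_{Y(D)}(y,x) \cdot 1_{Y(D')}(y',x')$, which is just the indicator function of $Y(D) \times Y(D')$. Using Proposition~\ref{prop:Yprod} to identify $Y(D) \times Y(D') = Y(D \times D')$, and combining with the identification of $E$ with $D' \times D$ from the previous step, we see this matches $\beta^* = 1_{Y(E)}$ as desired.

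There is essentially no obstacle here — the argument for $\bF_2$ transfers without change because nothing in Lemma~\ref{lem:mu-3} used the additive structure of $\bF_2$; it only used that the support of a product $\alpha' \otimes \alpha$ equals the product of supports. Since this persists for $\BB$, the proof is a mechanical transcription, and I would simply write ``the proof is identical to that of Lemma~\ref{lem:mu-3}'' with a brief indication of the support-set computation above.
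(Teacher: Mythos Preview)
Your proposal is correct and matches the paper's approach exactly: the paper's proof is the single sentence ``The proof is identical to that of Lemma~\ref{lem:mu-3},'' and your detailed justification (that only the multiplicative law is used, so the support of $\alpha' \otimes \alpha$ is the product of supports, and hence $1_{Y(D')} \otimes 1_{Y(D)} = 1_{Y(D' \times D)} = 1_{Y(E)}$) is precisely the content of that lemma transcribed to $\BB$.
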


\begin{proof}
The proof is identical to that of Lemma~\ref{lem:mu-3}.
\end{proof}

\begin{proof}[Proof of Theorem~\ref{thm:mu}]
The proof is identical to that of Theorem~\ref{thm:mu}.
\end{proof}

\end{document}